\documentclass{amsart}
\usepackage{amsthm,amstext,amsmath,amscd,amssymb,latexsym}
\usepackage{color}
\usepackage[matrix,arrow]{xy}
\usepackage{amsfonts,enumerate,array}
\usepackage[colorlinks=true]{hyperref}
\usepackage{todonotes}
	\usepackage{verbatim}
\usepackage[toc,page]{appendix}

\sloppy

\newcommand{\PP}{{\mathbb P}}
\newcommand{\pp}{{\mathbb P}}
\newcommand{\FF}{{\mathbb F}}
\newcommand{\F}{{\mathbb F}}
\newcommand{\ZZ}{{\mathbb Z}}
\newcommand{\NN}{{\mathbb N}}
\newcommand{\TT}{{\mathbb T}}

\newcommand{\LL}{{\mathcal L}}

\newcommand{\ls}{{\mathcal L}}
\newcommand{\lsk}{\hat{{\mathcal L}}}

\newcommand{\hh}{{\rm{h}}}

\DeclareMathOperator{\vdim}{vdim}
\DeclareMathOperator{\edim}{edim}
\DeclareMathOperator{\ldim}{ldim}

\DeclareMathOperator{\Pic}{Pic}

\newcommand{\Mac}{{\texttt {Macaulay2}}}

\newcommand{\paper}{: \begin{it}}
\newcommand{\jour }{, \end{it}}

\newtheorem{theorem}{Theorem}[section]
\newtheorem{lemma}[theorem]{Lemma}
\newtheorem{proposition}[theorem]{Proposition}
\newtheorem{corollary}[theorem]{Corollary}
\newtheorem{conjecture}[theorem]{Conjecture}

\theoremstyle{definition}

\theoremstyle{remark}
\newtheorem{remark}[theorem]{Remark}

\numberwithin{equation}{section}

\begin{document}

\title{On linear systems of $\PP^3$ with nine base points}

\author{Maria Chiara Brambilla}
\email{{\tt brambilla@dipmat.univpm.it}}
\address{Universit\`a Politecnica delle Marche, 
via Brecce Bianche, I-60131 Ancona, Italy}

\author{Olivia Dumitrescu}
\email{{\tt  dumitrescu@math.uni-hannover.de}}
\address{Institut f\"ur Algebraische Geometrie GRK 1463, Welfengarten 1, 30167 Hannover, Germany}

\author{Elisa Postinghel}
\email{{\tt elisa.postinghel@wis.kuleuven.be}}
\address{KU Leuven, Department of Mathematics, Celestijnenlaan 200B, 3001 Heverlee,
Belgium}

\thanks{The first author is partially supported by MIUR and INDAM.
The second author is a member of 
the Simion Stoilow Institute of Mathematics of the 
Romanian Academy. The third author is supported by the Research Foundation - Flanders (FWO)}

\keywords{Fat points, degeneration techniques, Laface-Ugaglia Conjecture, base locus, quadric surface}

\subjclass[2010]{Primary: 14C20. Secondary: 14J70, 14J26}

\begin{abstract}
We study special linear systems of surfaces of $\PP^3$ interpolating nine points in general position
 having a quadric as fixed component.
By performing degenerations in
 the blown-up space, we interpret the quadric obstruction in terms of linear obstructions for a quasi-homogeneous class.
By degeneration we also prove a Nagata type result for the blown-up projective plane in points that implies a base locus lemma for the quadric. 
As an application we establish Laface-Ugaglia Conjecture for linear systems with multiplicities bounded by $8$
and for homogeneous linear systems  with multiplicity $m$ and degree up to  $2m+1$.
\end{abstract}

\maketitle
\section{Introduction}

The theory of linear systems is a classical object of study which is related 
to secant varieties, polynomial interpolation and to several interesting 
recently discovered applications.
Even if linear systems have been studied for more than a century, basic questions, 
such as the dimensionality problem, are still open in general.

We denote by $ \LL= \LL_{n,d} (m_1,\ldots,m_s)$ the linear system of hypersurfaces 
of degree $d$ in $\PP^n$ interpolating $s$ points in general 
position with multiplicities respectively $m_1,\ldots,m_s$.
A linear system is said to be {\it non-special} if it has the (affine) expected dimension, 
which is $\edim(\LL) = \max(\vdim(\LL), 0)$, where  the (affine) 
virtual dimension $\vdim(\LL)$ is defined as
$$
\vdim(\LL) =\binom{n+d}{n}-\sum_{i=1}^s\binom{n+m_i-1}{n}.
$$

{\it Special linear systems} are those that have dimension strictly higher than the 
expected one
and the {\it speciality} of the system is  the difference 
$$\dim(\LL)-\edim(\LL)=\hh^1(\LL)\ge0.$$

In general, computing the dimension of the linear systems is a challenging task. 
In order to classify the special linear systems, one has to understand first what
are the \emph{obstructions}, namely what are the varieties 
 that, whenever contained with
multiplicity in the base locus of $\ls$, generate speciality. 
In \cite{bocci1,bocci2} these obstructions are named {\it special effect varieties}.

The well-known Alexander-Hirschowitz Theorem (\cite{AlHi}, see also \cite{ale-hirsch, Po}), which concerns
the case of linear systems with double points in $\PP^n$, provides a list of special systems where the 
special effect varieties are linear cycles (when $d=2$), 
a rational normal curve (when $d=3$)
or a quadric hypersurface (when $d=4$).

For higher multiplicities, the planer case has been deeply investigated by many authors.
For $n=2$, the famous Segre-Harbourne-Gimigliano-Hirschowitz Conjecture states that 
the obstructions are given by $(-1)$-curves
\cite{Gimigliano,  Harbourne3, Hir2, Segre} (see also \cite{Ciliberto,  CM1, CM2}).

For the higher dimensional case, in \cite{BDP,  DP} the authors extensively studied the linear special effect varieties in $\PP^n$.
In particular, knowing the exact contribution to the speciality of any multiple linear cycle 
contained in the base locus allows to introduce 
the notion of {\it linear expected dimension}, (see \cite[Definition 3.2]{BDP}).
We say that a system is {\it linearly non-special} whenever its dimension is equal to
 the linear expected dimension. 
This happens exactly when the only special effect varieties are linear cycles.

The authors devote the paper \cite{n+3} 
to the investigation of linear systems in $\PP^n$ with $n+3$ base points having non linear obstructions.
More precisely, the rational normal 
curve through the points is a special effect curve.

It is a well-known fact that  Cremona reduced 
linear systems of $\PP^3$ do not contain
rational normal curves in their base locus. 
Laface and Ugaglia conjectured 
\cite{laface-ugaglia-TAMS} that for a Cremona reduced 
linear system the only special effect varieties are lines and quadric surfaces determined by nine points.
 The conjecture of Laface-Ugaglia is known to be true if the number of points
 is less or equal than eight \cite{volder-laface}, and when the maximal
 multiplicity of the points is five \cite{ballico-brambilla-caruso-sala}.

In this paper we study
linear systems in $\PP^3$ with at least nine fat points in general position for which
the quadric hypersurface through nine of the base points, 
namely the fixed surface $Q:=\ls_{3,2}(1^9)$,
 is a special effect variety.

The first step is to prove a {\it base locus lemma} for quadric surfaces. 
Even a weak base locus lemma is not obvious to obtain. 
In fact, such results can be obtained as a consequence of {\it Nagata type} results, 
i.e.\ theorems which prove emptiness,
for linear systems in $\PP^2$ with ten points.
In Section \ref{section degree 2m+1} we establish 
a base locus lemma for the quadric surface through nine points 
(see Theorem \ref{quadric base locus})
for a particular class of linear systems in $\PP^3$. 
In order to prove this result, we study the emptiness of
 linear systems with ten points in $\PP^2$, via a suitable degeneration
 technique inspired by \cite{CDMR, CM3}.

The next step is to classify the special linear systems whose special effect varieties are quadrics.
In particular, we focus on the case of (Cremona reduced) linear systems with nine points in $\PP^3$, 
which is the first case where the speciality is not due only to linear obstructions. 

Our goal is to understand precisely how much the quadric surface in the base locus 
contributes to the speciality of the system.
Differently than in the linear case, to give a formula which computes exactly the 
contribution to the speciality seems difficult in general (see Remark \ref{q-in-general} for more details). 

Hence we focus first on some particular classes of linear systems, that are the homogeneous and the
quasi-homogeneous ones.

The first case we study is given by the quasi-homogeneous 
linear systems $\LL_{3,2m}(m^8,a)$, 
for $1\le a\le m$. 
This class of systems behaves surprisingly well, indeed we are able to find an 
easy formula which relates the speciality 
with the multiplicity of the quadric in the base locus, see 
Theorem \ref{theorem quasi-homogeneous}.
The proof of this result is based on a degeneration argument,
 which allows to reduce the ``mysterious'' contribution
of the quadric to the sum of two contributions given by linear 
special effect varieties in the degenerated systems.

We recall that in literature various degeneration arguments have been used to prove 
non-speciality results 
of linear systems in the plane \cite{CM1, CM2, Dum} and 
in higher dimension \cite{LaPo,Po}.

In the case of degree $2m+1$, the relation between the speciality and the quadric becomes 
less clear even in the homogeneous case. 
However in Theorem \ref{degree 2m+1} we classify all the special homogeneous linear 
systems with nine points of multiplicity $m$ and degree $2m+1$.
In order to prove this result we apply the emptiness results mentioned above and proved 
in Section
 \ref{section emptiness}. 

In the last section, as an application of Theorems \ref{theorem quasi-homogeneous} and 
\ref{degree 2m+1}, we show that the Laface-Ugaglia Conjecture holds for linear systems 
of any degree and nine points of multiplicity at most $8$. 
In order to complete this proof as well as  the proof of Theorem \ref{degree 2m+1}, 
some of the computations are made by means of the computer algebra system
 \Mac  \ \cite{macaulay}. 

We want to point out finally that the quadric hypersurfaces are {\it sporadic} special effect varieties. 
Indeed, it is expected (see e.g. the Fr\"oberg-Iarrobino Conjecture for homogeneous linear systems, \cite[Conjecture 4.8]{Chandler}) that they give contribution to the speciality of a linear system only in $\PP^3$ and $\PP^4$.
We think that the understanding  of the case of linear systems in $\PP^3$ with nine points is the initial step 
in order to investigate the special systems obstructed by a quadric.

This article is organized as follows. In Section \ref{preliminaries} 
we give a brief description of
the tools that we will use to prove our results.

In Section \ref{section-quasi-homogeneous} we classify the case $\LL_{3,2m}(m^8,a)$
 and we
give a geometric interpretation, via degenerations, 
of the quadric as special effect surface. 

In Section \ref{section degree 2m+1} we completely classify the case  $\LL_{3,2m+1}(m^9)$; the main results
are  Theorem \ref{quadric base locus} and Theorem \ref{theorem empty}.

In Section \ref{proof LU}, we prove that Laface-Ugaglia Conjecture holds for linear systems with $9$ base
points of multiplicities $m_i\le 8$.

\subsection*{Acknowledgements} 
The authors would like to thank the referee for his/her many useful comments.

\section{Preliminaries}\label{preliminaries}

In this preliminary section we collect general results and techniques 
that will be used throughout the paper.
We point out that by dimension of a linear system $\LL$ we mean the affine dimension $\dim(\LL)=\hh^0(\LL)$, and not the projective dimension.

\subsection{Degenerations}\label{degeneration technicalities}
A natural approach to the dimensionality problem of linear systems is via degenerations. 
Degenerations allow to move the multiple base points of a linear system 
in special position and compute the dimension via a semi-continuity argument. 

In \cite{CM1,CM2} Ciliberto and Miranda exploited a degeneration of the plane, 
originally proposed by Ran \cite{R} to study higher multiplicity 
interpolation problems for planar linear systems with general multiple base points. This approach consists in degenerating the plane to a reducible surface, 
with two components intersecting along a line, and simultaneously degenerating the linear system  to a limit linear system  which is somewhat easier than the original one. 
In particular this degeneration argument allows to use induction either on the degree 
or on the number of imposed multiple points. 
This method was generalized by the third author 
to the higher dimensional cases of $\PP^n$  \cite{Po} and of $({\PP^1})^n$ with Laface \cite{LaPo}.

Let $X\subseteq \pp^N$ be a variety, let
  $\Delta$ be a complex disc with center at the origin and let $\mathcal{X} \rightarrow \Delta$ be a $1$-dimensional
embedded degeneration of $X$ to the union of two varieties $X^1,X^2$, i.e. a $1$-parameter family $\{X_t\}_{t\in\Delta}$ such that  $X_t\cong X,\  t\neq 0$, and  $X_0=X^1\cup X^2$.
Let $\ls_t:=\ls$ be a line bundle on the general fibre.

A limit $\ls_0$ of $\ls_t$ is a  line bundle on $X_0$ obtained as fibred product of a line bundle
 $\ls^1$ on $X^1$ and a line bundle $\ls^2$ on $X^2$
  over the intersection of the restricted line bundles
 ${\ls^1}_{|Y}$ and ${\ls^2}_{|Y}$.
This provides a recursive
 formula for the  dimension of $\ls_0$ in terms of the dimensions of the
 involved linear
 systems on the two components:
\[
\dim(\ls_0)=\dim(\lsk^2)+\dim(\lsk^1)+\dim({\ls^1}_{|Y} \cap {\ls^2}_{|Y}),
\]
where $\lsk^i$ is the kernel of the restriction map $\ls^i \to{\ls^i}_{|Y}$, $i=1,2$.
Upper semi-continuity implies the inequality $\dim(\ls_t)\le\dim(\ls_0)$.

\subsection{Linear systems on $Q\cong\PP^1\times\PP^1$}\label{linear systems on Q}

In order to study the base locus of linear systems on $\PP^3$ through $9$ general points, we want to understand their restrictions to the quadric surface $Q=\ls_{3,2}(1^9)\cong\PP^1\times\PP^1$. The restriction of $\ls=\ls_{3,d}(m_1,\dots,m_9)$ will be the linear series of curves of bidegree $(d,d)$ on $Q$ with $9$ multiple points in general position, that we will denote as $$\ls_{\PP^1\times\PP^1, (d,d)}(m_1,\dots,m_9).$$

Not very much is known about such linear systems: Giuffrida, Maggioni, and Ragusa were
among the first to study linear systems on a quadric surface in \cite{GiMaRa}, see e.g. \cite{GuVT}.
As far as we know the only cases completely classified are those of
double points \cite{VTappendix} and triple points \cite{La}.

The following result allows to transform linear systems of given bidegree 
on the quadric $\PP^1\times\PP^1$ with  multiple base points to  
linear systems on $\PP^2$ with  multiple base points, and vice-versa, 
by means of  \emph{cut-and-sew} of polygons.

The image of $\PP^2$ blown-up at two points via the embedding given by the 
linear system $\ls_{2,d_1+d_2-m}(d_1-m,d_2-m)$ based at two 
torus-invariant points (e.g. two coordinate points) is a toric projective surface 
whose defining polytope is combinatorially equivalent to the 
pentagon obtained by the triangle $(d_1+d_2-m)\Delta$ by cutting two triangles 
$(d_1-m)\Delta$ and $(d_2-m)\Delta$ from two corners,  where
$\Delta$ is the $2$-simplex of $\mathbb{R}^2$, see Figure \ref{polytopes} on the left hand side. 

\begin{figure}[h]
\begin{center}
\includegraphics[width=.9\textwidth]{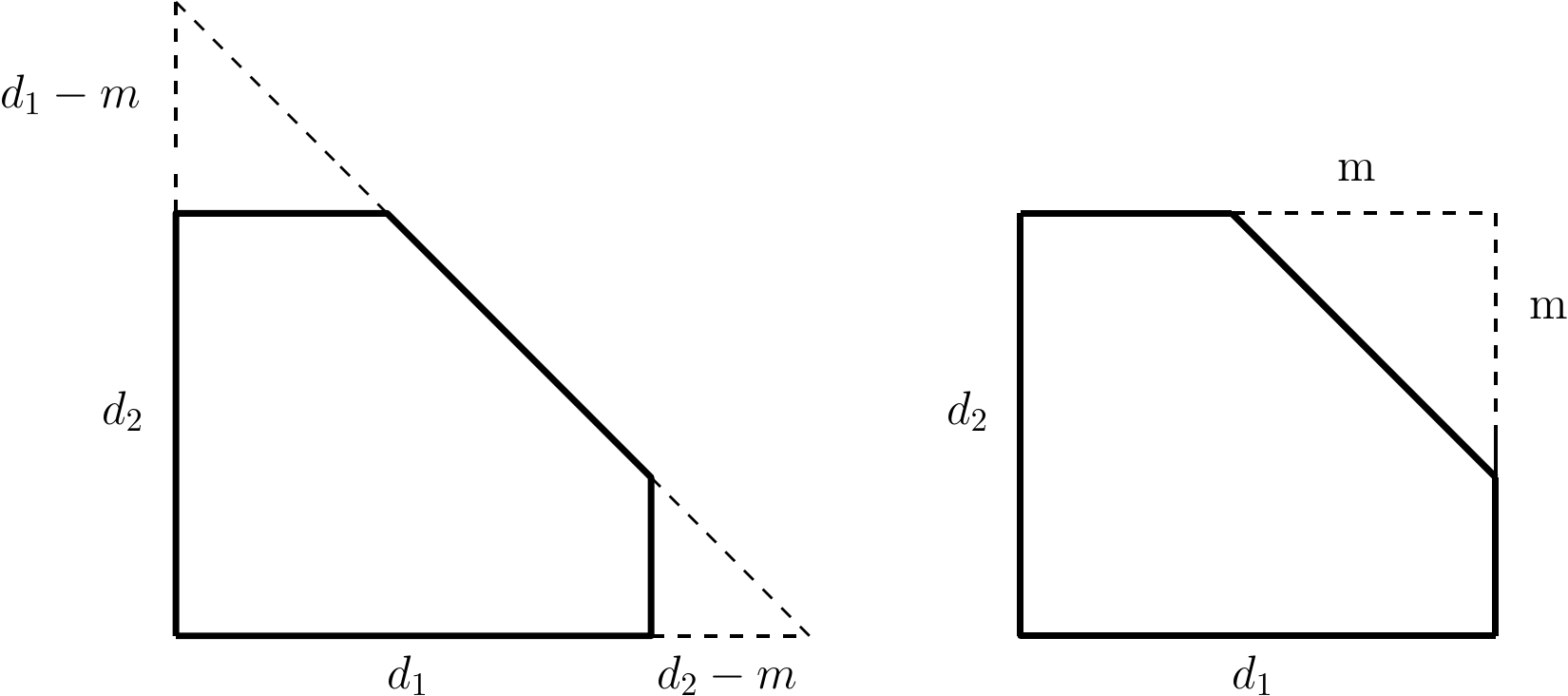}
\end{center}
\caption{Two equivalent polytopes}\label{polytopes}
\end{figure}

Notice that the same polytope can be obtained from the rectangle
 $[0,d_1]\times[0,d_2]\subset\mathbb{R}^2$ by cutting off the triangle $m\Delta$ from a corner.
This interprets the above toric surface  as the embedding of $\PP^1\times\PP^1$ via the linear system  of curves of bidegree $(d_1,d_2)$ with a point of multiplicity $m$, that is
 $\ls_{\PP^1\times\PP^1,(d_1,d_2)}(m),$ see Figure \ref{polytopes} on the right hand side.

In other terms, this is the birational map  that factors in the blow-up 
of $\PP^2$ at two points  and the blow-down of the (-1)-line joining them.

 This proves the following result.

\begin{lemma}\label{equivalence-quadric}
If $m\le d_1,d_2$, then
the following equality holds
$$
\dim(\ls_{\PP^1\times\PP^1,(d_1,d_2)}(m, m_1,\dots,m_s))=
\dim(\ls_{2,d_1+d_2-m}(d_1-m,d_2-m, m_1,\dots,m_s)).$$
\end{lemma}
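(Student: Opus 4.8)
The plan is to exploit the explicit birational geometry that the paragraphs preceding the lemma have already set up, namely the factorization through the blow-up of $\PP^2$ at two torus-invariant points followed by the blow-down of the $(-1)$-line joining them. Let $\pi\colon S\to\PP^2$ be the blow-up at two general (coordinate) points $p_1,p_2$, with exceptional divisors $E_1,E_2$ and $H=\pi^*\OO_{\PP^2}(1)$, and let $L$ denote the strict transform of the line through $p_1,p_2$; then $L=H-E_1-E_2$ is a $(-1)$-curve on $S$. Blowing it down gives a morphism $\sigma\colon S\to\PP^1\times\PP^1$, so $S$ is the common blow-up realizing the birational map $\PP^2\dashrightarrow\PP^1\times\PP^1$. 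The first step is to record the divisor-class dictionary under $\sigma$: the two rulings $F_1,F_2$ of $\PP^1\times\PP^1$ pull back to $H-E_1$ and $H-E_2$, so that a class of bidegree $(d_1,d_2)$ on the quadric pulls back to $(d_1+d_2)H-d_2E_1-d_1E_2$ (up to the labelling convention read off from Figure \ref{polytopes}), while imposing a point of multiplicity $m$ at the image of $L$ corresponds, since $\sigma$ contracts $L$, to subtracting $m\,L=m(H-E_1-E_2)$.

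The key step is then a direct comparison of global sections. On one side, $\HH^0$ of the quadric system $\ls_{\PP^1\times\PP^1,(d_1,d_2)}(m,m_1,\dots,m_s)$ equals, via $\sigma$, the space of sections on $S$ of the class $(d_1+d_2)H-d_2E_1-d_1E_2-mL-\sum m_iE_i'$ (with $E_i'$ the further exceptional divisors over the $s$ general points), because pullback along a birational morphism between smooth projective surfaces induces an isomorphism on global sections of the corresponding line bundles, and vanishing along the contracted curve $L$ to order $m$ is exactly the condition of having a point of multiplicity $m$ on the quadric. On the other side, pushing this same class on $S$ down to $\PP^2$ via $\pi$, one substitutes $L=H-E_1-E_2$ and collects coefficients: the class becomes $(d_1+d_2-m)H-(d_2-m)E_1-(d_1-m)E_2-\sum m_iE_i'$, which is precisely the $\PP^2$ system $\ls_{2,d_1+d_2-m}(d_1-m,d_2-m,m_1,\dots,m_s)$. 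Since both linear systems are identified with the complete linear system of one and the same divisor class on the intermediate surface $S$, their dimensions coincide, giving the claimed equality.

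The point where I expect the real work to hide is the hypothesis $m\le d_1,d_2$, which is exactly what guarantees that the multiplicities $d_1-m$ and $d_2-m$ appearing in the $\PP^2$ system are nonnegative, so that the resulting class is an honest linear system of plane curves with genuine (possibly empty) base conditions rather than a formal class with a negative multiplicity. I would therefore spend care verifying that under this inequality the strict-transform bookkeeping is valid, i.e.\ that the curve $L$ is not forced into the base locus for spurious reasons and that no fixed component along $E_1,E_2$ is being silently removed; concretely one checks that $(d_i-m)\ge 0$ makes the cut-off triangles $(d_1-m)\Delta,(d_2-m)\Delta$ legitimate (nondegenerate) truncations of the corners, which is the combinatorial shadow, visible in Figure \ref{polytopes}, of the same positivity condition. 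With the numerics of the class transformation and this nonnegativity check in hand, the equality of dimensions follows formally, and no delicate cohomological vanishing is needed, since the statement is purely about $\hh^0$ of a single class computed on the two birational models.
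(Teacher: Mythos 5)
Your proof is correct and follows essentially the same route as the paper: the paper's cut-and-sew comparison of the two polytopes is exactly the toric incarnation of the factorization through the two-point blow-up of $\PP^2$ followed by the blow-down of the $(-1)$-line joining them, and the paper itself states this factorization as the content of its proof. Your Picard-group bookkeeping on the common resolution (substituting $L=H-E_1-E_2$ into $(d_1+d_2)H-d_2E_1-d_1E_2-mL-\sum_i m_iE_i'$) is the divisor-class translation of that polytope identity, and your reading of the hypothesis $m\le d_1,d_2$ as nonnegativity of the multiplicities $d_i-m$ corresponds to the paper's requirement that the corner truncations of the triangle be legitimate.
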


\begin{remark}
In \cite[Theorem 1.1]{CGG05} the authors show how to convert
 linear systems on products of projective spaces $\PP^{n_i}$ interpolating multiple points
into linear systems in the projective space $\PP^{\sum n_i}$ interpolating 
multiple points and multiple linear subspaces, and back.
We point out that the case $m=0$ in Lemma \ref{equivalence-quadric}  
falls into those equivalences, in the particular case of $\PP^1\times\PP^1$ and $\PP^2$.
\end{remark}

\subsection{Cremona transformations}
\label{cremona}
We recall that the \emph{standard Cremona transformation} of $\PP^n$ is the birational transformation defined by the following rational map:
$$
\textrm{Cr}:(x_0:\dots: x_n) \to (x_0^{-1}:\dots: x_n^{-1}).
$$
This map induces an action on the Picard group of the $n$-dimensional space blown-up at $s$ points.
Let $\ls=\ls_{n,d}(m_1,\dots,m_s)$ be a linear system based on $s$ points in general position; we can assume, without loss of generality, that the first $n+1$ points are the coordinate points.
The Cremona action on $\ls$ is described by the following rule (see for
example \cite{Dolgachev, laface-ugaglia-TAMS}). Set
$$c :=  m_1+\cdots+m_{n+1}-(n-1)d,$$
then 
$$
\textrm{Cr}(\ls)= \ls_{n,d-c}(m_1-c,\dots,m_{n+1}-c, m_{n+2},\dots, m_s)
$$
and $$\dim(\ls)=\dim(\textrm{Cr}(\ls)).$$
We will use this transformations in the cases $n=2,3$ to reduce the computation of the dimension of a linear system $\ls$ to the computation of the dimension of its Cremona transform $\textrm{Cr}(\ls)$ that has lower degree and multiplicities whenever $c>0$.

If $c\le 0$ we will say that the linear system $\ls$ is \emph{Cremona reduced}.

\subsection{Computing with \Mac}
\label{Mac}
In this paper we will need to perform some explicit computation in order to complete our classifications.
In particular the proofs of Proposition \ref{theorem empty 1}, Lemma \ref{ten-points},
Lemma \ref{low-degrees} and Theorem \ref{LUthm} are computer aided.
We perform these computations by means of the computer algebra system \Mac. 
The procedure we use consists essentially in checking that several
square matrices, randomly chosen, have maximal rank.
We work over a field of characteristic
$31991$ and the proofs hold also in characteristic zero.

We use two scripts (one for linear systems in $\PP^2$ and one in $\PP^3$)
available at this url
\url{http://dipmat.univpm.it/~brambilla/NinePointsP3.html},
which allow to compute the dimension and the speciality of a linear system with given degree and multiplicities.

\section{Quasi-homogeneous linear systems $\LL_{3,2m}(m^8,a)$}
\label{section-quasi-homogeneous}

In this section we describe  a class of special linear systems in $\PP^3$ with nine base points for which the quadric surface $Q$ is the only special effect variety. We employ a double degeneration argument,  similar to the one employed in \cite{Po} for linear systems with arbitrary general double points,  that is based on the degeneration of the space described in Section \ref{degeneration technicalities}. The linear system will degenerate into one that has only linear special effect varieties, and that is therefore understood by the results in \cite{BDP,DP}.

Fix non-negative integers  $a,m$. 
Consider the \emph{quasi-homogeneous} linear system in $\PP^3$ 
\begin{equation}\label{2m,m^8,a}
\ls(m,a):=\ls_{3,2m}(m^8,a).
\end{equation} 
The main result of this section is the following. 

\begin{theorem}\label{theorem quasi-homogeneous}
If $1\le a\le m$, the linear system \eqref{2m,m^8,a} satisfies
\begin{align*}
&\dim(\ls(m,a))=m-a+1,\\
&h^1(\ls(m,a))={{a+1}\choose{3}}+{a\choose{2}},
\end{align*}
hence it is special if and only if $2\le a\le m$.
Moreover the only special effect variety for $\ls(m,a)$ is the quadric through nine points which is contained in the base locus with exact multiplicity $a$.
\end{theorem}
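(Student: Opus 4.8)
The plan is to trap $\dim(\ls(m,a))$ between matching bounds and then read off the speciality cohomologically. For the lower bound I would argue directly. Writing $q$ for the equation of the unique quadric $Q=\ls_{3,2}(1^9)$ and setting $\mathcal{R}:=\ls_{3,2(m-a)}((m-a)^8)$, multiplication by $q^a$ embeds $\mathcal{R}$ into $\ls(m,a)$: the form $q^a$ has degree $2a$, vanishes to order $a$ at each of the first eight points and to order $a$ at the ninth, so $q^a\cdot\mathcal{R}\subseteq\ls(m,a)$ and hence $\dim(\ls(m,a))\ge\dim(\mathcal{R})$. The residual $\mathcal{R}$ is homogeneous in only eight points, is Cremona reduced (its Cremona constant is $4(m-a)-2\cdot 2(m-a)=0$) and carries no linear obstruction, so by the classification for at most eight points \cite{volder-laface} together with \cite{BDP,DP} it is non-special of dimension $\vdim(\mathcal{R})=m-a+1$. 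This already yields $\dim(\ls(m,a))\ge m-a+1$.

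The heart of the argument is the reverse inequality $\dim(\ls(m,a))\le m-a+1$, equivalently the statement that $Q$ sits in the base locus with multiplicity at least $a$, and this is where I would invoke the degeneration of Section \ref{degeneration technicalities}. Following the $\PP^n$ construction of \cite{Po}, I would blow up a plane in the central fibre of $\PP^3\times\Delta$ so that $X_0=X^1\cup_Y X^2$ with $Y\cong\PP^2$, $X^1\cong\PP^3$ and $X^2$ a $\PP^1$-bundle over $\PP^2$ (isomorphic to $\Bl_{\mathrm{pt}}\PP^3$), and degenerate $\ls$ to a limit $\ls_0$ by distributing the nine points between the two components and choosing line bundles $\ls^1,\ls^2$ matching on $Y$. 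The recursive formula $\dim(\ls_0)=\dim(\lsk^1)+\dim(\lsk^2)+\dim({\ls^1}_{|Y}\cap{\ls^2}_{|Y})$ then expresses $\dim(\ls_0)$ through systems on $\PP^3$ and on $\PP^2$. The purpose of the chosen partition — and, where one step does not suffice, of a second degeneration of $X^2$, the ``double'' degeneration — is that every system occurring in the limit is obstructed only by linear cycles, hence has dimension and speciality computed by \cite{BDP,DP}; semicontinuity $\dim(\ls_t)\le\dim(\ls_0)$ then gives the upper bound once $\dim(\ls_0)=m-a+1$ is checked.

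With $\dim(\ls(m,a))=m-a+1$ established, the cohomological statement follows from Riemann--Roch: since $\vdim=\chi$ and the higher cohomology vanishes for these bundles, $h^1=h^0-\vdim(\ls(m,a))$, and as $\vdim(\ls(m,a))=m+1-\binom{a+2}{3}$ this simplifies via $\binom{a+2}{3}-a=\binom{a+1}{3}+\binom{a}{2}$ to the claimed $h^1=\binom{a+1}{3}+\binom{a}{2}$; in particular $h^1>0$, i.e. the system is special, precisely when $a\ge2$. The degeneration is moreover meant to exhibit this number as a sum of two genuinely linear contributions in the limit — a fat point of multiplicity $a-1$ on a three-dimensional component, contributing $\binom{a+1}{3}$, and a fat point of multiplicity $a-1$ on the plane $Y\cong\PP^2$, contributing $\binom{a}{2}$ — which is the promised reinterpretation of the quadric obstruction in terms of linear obstructions.

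Finally, for the exact multiplicity and the uniqueness I would observe that $\dim(\ls(m,a))=\dim(\mathcal{R})$ forces $q^a$ to be a fixed factor, so $Q$ occurs with multiplicity at least $a$; it occurs with multiplicity exactly $a$ because $\mathcal{R}$ does not contain $Q$. Indeed the members of $\mathcal{R}$ through $Q$ form the subsystem $q\cdot\ls_{3,2(m-a)-2}((m-a-1)^8)$, of dimension $m-a<\dim(\mathcal{R})$, so a general member of $\mathcal{R}$ avoids $Q$. Since $\mathcal{R}$ is non-special with no further base components, all the speciality of $\ls(m,a)$ is carried by $Q$, which is therefore the unique special effect variety. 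I expect the main obstacle to be the second step: arranging the degeneration and the partition of the nine points so that the central fibre genuinely breaks into linearly non-special pieces whose dimensions add up to $m-a+1$, while correctly bookkeeping the matching condition on $Y$ and pinning down the two linear contributions.
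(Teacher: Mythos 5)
Your overall architecture coincides with the paper's own proof: lower bound from the inclusion $q^a\cdot\ls_{3,2(m-a)}((m-a)^8)\subseteq\ls(m,a)$, upper bound by degeneration plus semicontinuity, $h^1$ from the Euler characteristic (your arithmetic $\binom{a+2}{3}-a=\binom{a+1}{3}+\binom{a}{2}$ is correct), and exact multiplicity and uniqueness from the splitting $\ls(m,a)=aQ+\ls_{3,2(m-a)}((m-a)^8)$; the first and last of these steps are sound as you wrote them. However, the heart of the theorem, namely $\dim(\ls(m,a))\le m-a+1$, is not proved in your proposal: you set up the degeneration framework and then defer precisely the choices on which everything depends. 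What is missing is (i) the distribution of the points, $7$ on the component $\FF\cong\Bl_{p_0}\PP^3$ and $2$ on the exceptional component $\PP\cong\PP^3$; (ii) the twist $\delta=m$, which gives $D^{\F_0}=2mH^\F-mE_0-m\sum_{i=1}^{7}E_i$ and $D^{\PP_0}=mH^\PP-mE_8-aE_9$; (iii) the second degeneration, which is not a degeneration ``of $X^2$'' but a specialization of the point configuration, sending one point from each component ($p_1$ and $p_9$) onto the intersection plane $Y$ --- without this the matching condition on $Y$ cannot be controlled, because neither component would cut a complete series on $Y$; and (iv) the cohomology bookkeeping: by Proposition \ref{hatD_F and D_F}, $D^{\F_0}$ is non-special and its kernel $\hat{D}^{\F_0}=\ls_{3,2m}(m+1,m^6,m-1)$ is non-effective; $D^{\PP_0}=\ls_{3,m}(m,a)$ and its kernel $\hat{D}^{\PP_0}=\ls_{3,m-1}(m,a-1)$ are governed by \cite{BDP,DP}, the kernel being non-effective with $h^1=\binom{a+1}{3}$; both components then cut complete series on $Y_0$, so the matching system is $R_0=mh-me_1-ae_9$, i.e.\ $\ls_{2,m}(m,a)$ on the twice blown-up plane, with $h^0(R_0)=m-a+1$ and $h^1(R_0)=\binom{a}{2}$. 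Lemma \ref{formula h0 central divisor} then gives $h^0(D''_0)=0+0+(m-a+1)$, and Lemma \ref{semicontinuity} closes the argument. You yourself flag this as ``the main obstacle,'' and it is: without items (i)--(iv) you have a strategy, not a proof.

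A smaller error confirms that the limit computation was not carried out: you predict the two linear contributions in the limit to be fat points of multiplicity $a-1$, contributing $\binom{a+1}{3}$ and $\binom{a}{2}$. In the actual limit both obstructions are \emph{lines} of multiplicity $a$: the line through the two base points of $D^{\PP_0}$ (equivalently of $\hat{D}^{\PP_0}$) in the three-dimensional component, which accounts for $\binom{a+1}{3}$, and the line through the two points of $R_0$ on $Y_0$, which accounts for $\binom{a}{2}$. Finally, your description of the first degeneration as blowing up a plane rather than a point is harmless: it produces the same central fibre $\PP^3\cup\Bl_{\mathrm{pt}}\PP^3$ glued along $\PP^2$, with the roles of proper transform and exceptional divisor exchanged.
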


It was proved already  in  \cite[Section 5]{laface-ugaglia-BullBelg} that for $a=m$ the linear system \eqref{2m,m^8,a} has one element, that is the $m$-multiple of the quadric through the $9$ points. 
This also implies that if  $a>m$,  the linear system \eqref{2m,m^8,a} is empty. 
The case $a=0$ was proved to be non-special in \cite{volder-laface}.
So the remaining cases to explore are $1\le a\le m-1$; for the sake of completeness
 we include here the
 proof of the case $a=m$ as well.  

Theorem \ref{theorem quasi-homogeneous}, shows that the linear system \eqref{2m,m^8,a} is special with dimension being a linear function of $m$ and $a$.
The only special effect variety is the quadric through the nine points which is contained with multiplicity $a$ in the base locus and moreover, quite surprisingly, 
its contribution to the speciality, namely $h^1(\ls(m,a))$ only depends on the multiplicity of containment of the quadric. 

\begin{remark}\label{quasi-homo-q}
If we define $q(\ls(m,a)):=\chi(\ls(m,a)_{|Q})$ to be the Euler characteristic of the restriction
of $\ls(m,a)$ to the quadric,
(see \eqref{qu} in Section \ref{section Laf-Ug}), then one can easily check the following:
$$
q(\ls(m,a))=1-{{a+1}\choose2}<0 \textrm{ iff }a\ge 2,  \textrm{ and } q(\ls)=0 \textrm{ if }a=1.
$$
\end{remark}

This in particular shows that Theorem \ref{theorem quasi-homogeneous} has the following immediate consequence:

\begin{corollary}
Laface-Ugaglia Conjecture (see Conjecture \ref{LUconj} in Section \ref{section Laf-Ug}) is true for any quasi-homogeneous linear system of the form \eqref{2m,m^8,a}.
\end{corollary}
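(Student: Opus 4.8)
The plan is to match the value predicted by Conjecture~\ref{LUconj} against the exact speciality $h^1(\ls(m,a)) = \binom{a+1}{3} + \binom{a}{2}$ already furnished by Theorem~\ref{theorem quasi-homogeneous}, so that the corollary reduces to a combinatorial identity. First I would record that the hypotheses of the conjecture are met: ordering the nine multiplicities in non-increasing order, the four largest equal $m$ because $a\le m$, hence the Cremona coefficient is $c = 4m - 2(2m) = 0\le 0$ and $\ls(m,a)$ is Cremona reduced.

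Next I would unwind the prediction of Conjecture~\ref{LUconj}. It expresses the speciality as a sum of a contribution from the linear special effect varieties and a contribution from the quadric $Q=\ls_{3,2}(1^9)$, the latter being read off from the function $q(\ls)=\chi(\ls_{|Q})$ along the tower obtained by residuating $Q$. By the final assertion of Theorem~\ref{theorem quasi-homogeneous}, the only special effect variety of $\ls(m,a)$ is $Q$, contained with exact multiplicity $a$; in particular the linear contribution vanishes and the whole predicted speciality comes from the quadric term.

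To evaluate that term I would residuate $Q$ repeatedly. Since the class of $Q$ is $\ls_{3,2}(1^9)$, subtracting it lowers the degree by $2$ and every multiplicity by $1$, so $\ls(m,a)-kQ = \ls(m-k,a-k)$ for $0\le k\le a$, and by $a\le m$ each intermediate system is again of the form covered by Theorem~\ref{theorem quasi-homogeneous} (so it too carries no linear obstruction). The quadric leaves the base locus exactly when $a-k=0$, so the tower has length $a$, and by Remark~\ref{quasi-homo-q} we have $q(\ls(m-k,a-k)) = 1-\binom{a-k+1}{2}$. The predicted quadric speciality is therefore
\[
\sum_{k=0}^{a-1}\max\!\left(0,\ \binom{a-k+1}{2}-1\right)
= \sum_{j=1}^{a}\left(\binom{j+1}{2}-1\right)
= \binom{a+2}{3}-a,
\]
where the first equality substitutes $j=a-k$ and uses that the $j=1$ summand is $0$, and the second is the hockey-stick identity $\sum_{j=1}^{a}\binom{j+1}{2}=\binom{a+2}{3}$. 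Finally, Pascal's rule gives $\binom{a+2}{3}=\binom{a+1}{3}+\binom{a+1}{2}=\binom{a+1}{3}+\binom{a}{2}+a$, so the prediction equals $\binom{a+1}{3}+\binom{a}{2}=h^1(\ls(m,a))$, and the conjecture holds for the whole family.

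I expect the only delicate point to be bookkeeping rather than geometry: one must be sure that, at each floor of the residuation tower, the quadric term of Conjecture~\ref{LUconj} is genuinely $\max(0,-q(\ls(m-k,a-k)))$ with no competing linear contribution and no double counting. This is exactly what the last sentence of Theorem~\ref{theorem quasi-homogeneous}, applied to each $\ls(m-k,a-k)$, guarantees; once this is in place the remaining work is the elementary identity displayed above.
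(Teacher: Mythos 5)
Your numerical work is all correct --- the Cremona reducedness check ($c=4m-2\cdot 2m=0$), the identification $\ls(m,a)-kQ=\ls(m-k,a-k)$, the values $q(\ls(m-k,a-k))=1-\binom{a-k+1}{2}$ from Remark \ref{quasi-homo-q}, and the binomial identity --- but the logical structure of the argument is inverted, and this is a genuine gap. Conjecture \ref{LUconj} is not a formula for the speciality: for $\ls(m,a)$ with $1\le a\le m$, where $q\le 0$, its content is the single assertion $\dim(\ls(m,a))=\dim(\ls(m,a)-Q)$. The tower formula you evaluate, $\hh^1(\LL)=-\sum_{k}q(\LL-kQ)$, is formula \eqref{formula h1} of Remark \ref{q-in-general}, which is \emph{derived from} the conjecture by assuming that dimension equality at every floor of the tower. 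Checking that this derived prediction agrees with the $\hh^1$ computed in Theorem \ref{theorem quasi-homogeneous} therefore has the shape ``the conjecture implies $X$; $X$ is true; hence the conjecture holds'', which by itself proves nothing. The delicate point you flag at the end (no competing linear contribution, no double counting) concerns whether the predicted \emph{value} is computed correctly; the real missing step is the implication from a matching value back to the dimension statement that the conjecture actually makes.

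The gap is repairable, and the paper's own route shows how short the honest argument is: by Theorem \ref{theorem quasi-homogeneous}, $\dim(\ls(m,a))=m-a+1$, while $\ls(m,a)-Q=\ls(m-1,a-1)$ has dimension $(m-1)-(a-1)+1=m-a+1$ as well (for $a\ge 2$ by the theorem again, for $a=1$ because $\ls_{3,2(m-1)}((m-1)^8)$ is non-special of dimension $m$, as recorded in its proof via Proposition \ref{hatD_F and D_F}); hence part (1) of the conjecture holds outright, with no speciality computation at all. Alternatively, your aggregate identity can be salvaged into a proof: by additivity of $\chi$ along the sequence \eqref{restriction-to-quadric} and vanishing of $h^2,h^3$, each step of the tower satisfies $\hh^0(\ls(m-k,a-k))-\hh^0(\ls(m-k-1,a-k-1))=q(\ls(m-k,a-k))+\hh^1(\ls(m-k,a-k))-\hh^1(\ls(m-k-1,a-k-1))$, so the total drop $\hh^0(\ls(m,a))-\hh^0(\ls(m-a,0))$ equals $\hh^1(\ls(m,a))+\sum_{k=0}^{a-1}q(\ls(m-k,a-k))$, which your identity says is zero; since each step drop is non-negative (as $H^0(\LL-Q)\hookrightarrow H^0(\LL)$), every step drop vanishes, which is exactly part (1) at every floor. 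One of these bridges --- in effect the converse direction of Remark \ref{q} --- must be stated for your write-up to be a proof. Finally, note that the corollary as phrased covers all non-negative $a$: for $a=0$ one has $q=1>0$ and needs linear non-speciality (true, since $\ls(m,0)$ is non-special), and for $a>m$ both $\ls(m,a)$ and $\ls(m,a)-Q$ are empty; your argument silently restricts to $1\le a\le m$.
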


\subsection{Degeneration of the blown-up $\PP^3$ at $9$ points}
\label{degeneration}

In this section we give a detailed description of the degeneration techniques that we will employ to prove Theorem \ref{theorem quasi-homogeneous}.

\subsubsection{First degeneration}\label{first degeneration}
Consider the trivial family  $\mathcal{V}=\PP^3\times \Delta\to \Delta$ with fibres $V_t\cong\PP^3$, $t\in\Delta$. The blow-up of a point $p_0\in V_0$ produces  a flat morphism  $\mathcal{X}'\to \Delta$ with general fibre $X'_t\cong \PP^3$ and central fibre 
$X'_0=\FF\cup\PP$, where $\FF\cong \textrm{Bl}_{p_0}\PP^3$ is the pull-back of $V_0$ and $\PP\cong\PP^3$ is the exceptional divisor in the total space $\mathcal{X}'$.
The two components $\FF$ and $\PP$ meet transversally along a surface $Y\cong\PP^2$ that, as a divisor,
belongs to the exceptional class of $\FF$ and to the hyperplane class of $\PP$. 
More precisely, if $E_0:=\PP|_\FF$ denotes the exceptional divisor of  $p_0\in V_0$, $H^\FF$ the hyperplane class of $\FF$ and $H^\PP$ that of $\PP$, with   $H^\PP\sim E_0$, we have $\Pic(\F)=\langle H^\F, E_0\rangle$ and
 $\Pic(\PP)=\langle H^\PP\rangle$.

 We choose $7$ general points on $\FF$ and $2$ points on $\PP$ and we consider them as limits of $9$ general points in the general fibre $X'_t$. More precisely, for $t\in\Delta$  let  $\{p_1(t)\dots,p_{9}(t)\}$ be a general collection of points and assume that
 $p_{1}(0),\dots,p_{7}(0)\in\FF$ while $p_{8}(0)$ and $p_{9}(0)\in\PP$.
Consider  $\widetilde{\mathcal{X}}'$  the blow-up of $\mathcal{X}'$ along the 
horizontal curves $\{p_i(t)\}_{t\in\Delta}$, with exceptional divisors $\mathcal{E}_i$,
$i=1,\dots,9$. Denote by $\widetilde{X}'_t$, $t\in\Delta$  the fibres of the new family. Write also  $E_i:=\mathcal{E}_i|_{\widetilde{X}'_t}$, for $t\in\Delta$, $i=1,\dots,9$.
The general fibre is $\widetilde{X}'_t\cong \textrm{Bl}_{p_1,\dots,p_9}(\PP^3)$,
the blow-up of $\PP^3$ at $9$ general points, so that  $\Pic(\widetilde{X}_t)=\langle H,E_1,\dots,\dots, E_9\rangle$. The central fibre is described by
 $\Pic(\F)=\langle H^\F, E_0, E_{1},\dots, E_7\rangle$ and 
 $\Pic(\PP)=\langle H^\PP, E_8,E_9\rangle$, where by abuse of notation $\FF$ and $\PP$ are also the pull-backs in $\widetilde{\mathcal{X}}'$ of the components of $X'_0$.

\subsubsection{Second degeneration}\label{second degeneration}
We  further specialize the points by sending a point from each component 
of $X'_0$ to the intersection. 
More precisely, consider the trivial family $\mathcal{X}'':=X'_0\times \Delta'$
and, on each fibre over $s\in\Delta'$, take a collection of general points 
$\{p_1(s),\dots,p_7(s)\}\subset \FF$ and $\{p_8(s),p_9(s)\}\subset \PP$
such that, on the central fibre, $p_1(0)$ and $p_9(0)\in \FF\cap\PP$. 

Consider  $\widetilde{\mathcal{X}}''$  the blow-up of $\mathcal{X}''$ along the 
horizontal curves $\{p_i(s)\}_{s\in\Delta'}$, with $\mathcal{E}_i$ exceptional divisors, $i=1,\dots,9$. 
The components of the fibres are described by the same Picard groups as the components of $\widetilde{X}'_0$ 
(see Section \ref{first degeneration}).
We use the  symbols $\FF_0$ and $\PP_0$ to denote the pull-backs of the components of the central fibre over $\Delta'$, 
$X''_0$, and the symbol $Y_0$ for their intersection. 
Notice that $Y_0\cong\textrm{Bl}_{p_1,p_9}(\PP^2)$ is a plane blown-up at two points. 

The combination of the two above subsequent degenerations produces a degeneration of 
$\textrm{Bl}_{p_1,\dots,p_9}(\PP^3)$ to the union of blown-up spaces
$\widetilde{X}''_0=\F_0\cup\PP_0$  intersecting along a blown-up plane $Y_0$.

\subsubsection{Intersection table on the central fibre}\label{intersection table}
Notice that, as a divisor on $\FF_0$ (or on $\PP_0$), the surface $Y_0$ 
is represented  by the class $E_0-E_9$ (resp. $H^\PP-E_1$).

One can compute the restrictions of any divisor on $\FF_0$ or on $\PP_0$ to $Y_0$, by means of the following intersection table for the generators of the Picard groups:
\begin{itemize}
\item $H^{\PP}|_{Y_0}=:h$,
\item $E_1|_{Y_0}=:e_1$, 
\item $E_2|_{Y_0}=0$,
\end{itemize} 
and 
\begin{itemize}
\item $H^\F|_{Y_0}=0$, 
\item $E_0|_{Y_0}=-h$, 
\item $E_i|_{Y_0}=0$, $i=3,\dots,8$,
\item $E_9|_{Y_0}=:e_9$.
\end{itemize}
In this notation we have $\Pic(Y_0)=\langle h,e_1,e_9\rangle$.

\subsection{The limit linear system}\label{degeneration of linear system}

Let  $\ls\subset |\mathcal{O}_{\PP^3}(d)|$ be a linear system of degree-$d$ surfaces in $\PP^3$ with nine assigned multiple points in general position. Let $D$ be the corresponding divisor in the blown-up space $\textrm{Bl}_{p_1,\dots,p_9}(\PP^3)$. 

In the notation of Section \ref{first degeneration}, 
consider on $\mathcal{X}'$ the twisted line bundle $\mathcal{O}_{\mathcal{X}'}(d)\otimes \mathcal{O}_{\mathcal{X}'}(-\delta\PP)$.
It restricts to $\mathcal{O}_{\PP^3}(d)$ on $X'_t$ and, for $t=0$, to $\mathcal{O}_\FF(dH_\F-\delta E_0)$ on $\FF$ and to $\mathcal{O}_{\PP}(\delta H^\PP)$ on $\PP$.
By following the first degeneration we can consider the linear system $\ls_t=\ls\subset|\mathcal{O}_{\PP^3}(d)|$ on $X'_t\cong\PP^3$ and its limit $\ls'_0$ on $X'_0$. We denote by $D'_0$ the corresponding divisor class in the blown-up central fibre $\widetilde{X}'_0$.

By following the second degeneration and blowing-up the nine points on each fibre (see Section \ref{second degeneration}), we obtain the  limit divisor $D''_0$ in the blown-up central fibre $\tilde{X}''_0$ that is given by divisors $D^{\F_0}$ and $D^{\PP_0}$ on the two components.
We consider the restriction maps to $Y_0$, $D^{\FF_0}\to R^{\FF_0}:=D^{\FF_0}|_{Y_0}$
and $D^{\PP_0}\to R^{\PP_0}:=D^{\PP_0}|_{Y_0}$ and denote by $\hat{D}^{\F_0}$ and $\hat{D}^{\PP_0}$ the kernels respectively.
Let 
$R_0:=R^{\FF_0}\cap R^{\PP_0}$ denote the intersection of the restricted divisors.

\begin{lemma}\label{formula h0 central divisor}
In the notation of above, for $i\ge0$
we have 
$$
h^i(\widetilde{X}''_0, D''_0)=h^i(\PP_0, \hat{D}^{\PP_0})+h^i(\F_0, \hat{D}^{\F_0})+h^i(Y_0,R_0).
$$
\end{lemma}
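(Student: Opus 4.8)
The plan is to derive the identity from a single Mayer--Vietoris type short exact sequence of sheaves on the reducible central fibre $\widetilde{X}''_0=\F_0\cup_{Y_0}\PP_0$ and to read the three summands off the associated long exact cohomology sequence.

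The preliminary point is that $D''_0$ is a genuine line bundle on $\widetilde{X}''_0$: the twist by $-\delta\PP$ of Section~\ref{degeneration of linear system} is arranged so that the two restricted classes agree on the double locus, $R^{\F_0}=R^{\PP_0}$ as divisor classes on $Y_0$; write $\mathcal{O}_{Y_0}(R)$ for this common line bundle, so that $\mathcal{O}_{\widetilde{X}''_0}(D''_0)$ is the fibre product of $\mathcal{O}_{\F_0}(D^{\F_0})$ and $\mathcal{O}_{\PP_0}(D^{\PP_0})$ over $\mathcal{O}_{Y_0}(R)$. Since $\F_0$ and $\PP_0$ meet transversally along $Y_0$, a local computation in the normal-crossing model $\{xy=0\}$ identifies the ideal sheaf as $\mathcal{I}_{Y_0/\widetilde{X}''_0}\cong\mathcal{O}_{\F_0}(-Y_0)\oplus\mathcal{O}_{\PP_0}(-Y_0)$ (push-forwards from the two components). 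Tensoring the ideal sheaf sequence of $Y_0$ by $\mathcal{O}_{\widetilde{X}''_0}(D''_0)$, and using $\hat{D}^{\F_0}=\mathcal{O}_{\F_0}(D^{\F_0}-Y_0)$ and $\hat{D}^{\PP_0}=\mathcal{O}_{\PP_0}(D^{\PP_0}-Y_0)$, yields the fundamental sequence
$$
0\longrightarrow\hat{D}^{\F_0}\oplus\hat{D}^{\PP_0}\longrightarrow\mathcal{O}_{\widetilde{X}''_0}(D''_0)\longrightarrow\mathcal{O}_{Y_0}(R)\longrightarrow 0,
$$
whose second arrow is restriction to $Y_0$. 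Because the two kernel sheaves are supported on a single component, $h^i(\widetilde{X}''_0,\hat{D}^{\F_0})=h^i(\F_0,\hat{D}^{\F_0})$ and similarly for $\PP_0$, which accounts for the first two terms of the formula.

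Passing to cohomology, the alternating sum is immediate: the sequence above gives the Euler characteristic identity at once, and the point is to refine it degree by degree. For $i=0$ the restriction $H^0(\widetilde{X}''_0,D''_0)\to H^0(Y_0,R)$ has image exactly $R^{\F_0}\cap R^{\PP_0}=R_0$, since a pair of sections on $\F_0$ and $\PP_0$ glues to a global section precisely when its two boundary values coincide; thus $h^0(Y_0,R_0)=\dim(R^{\F_0}\cap R^{\PP_0})$, recovering the dimension formula of Section~\ref{degeneration technicalities}.

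The hard part will be the same identification in higher degree, that is, controlling the connecting homomorphisms $H^i(Y_0,R)\to H^{i+1}(\F_0,\hat{D}^{\F_0})\oplus H^{i+1}(\PP_0,\hat{D}^{\PP_0})$; the case $i=0$ already shows they need not vanish, and it is exactly their behaviour that isolates the genuine $Y_0$-contribution $h^i(Y_0,R_0)$ from the component contributions. To settle this I would use the explicit bases of $\Pic(\F_0)$, $\Pic(\PP_0)$ and $\Pic(Y_0)$ and the intersection table of Section~\ref{intersection table} to write $\hat{D}^{\F_0}$, $\hat{D}^{\PP_0}$ and $R$ as concrete divisor classes on blow-ups of $\PP^3$ and on $Y_0\cong\Bl_{p_1,p_9}\PP^2$, and then invoke standard vanishing for these line bundles to show that in each degree the long exact sequence breaks into short exact pieces compatible with the claimed splitting.
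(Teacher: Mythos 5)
Your argument is built on a premise that fails in exactly the situation the lemma is designed for: you assume the two restricted classes agree on the double surface, $R^{\F_0}=R^{\PP_0}$, and you make this common class the third term of your Mayer--Vietoris sequence. In the paper's setting these classes are genuinely different elements of $\Pic(Y_0)=\langle h,e_1,e_9\rangle$: in the proof of Theorem \ref{theorem quasi-homogeneous} one has $R^{\F_0}=mh-me_1$ while $R^{\PP_0}=mh-ae_9$, and $R_0$ is \emph{not} a common restriction but the intersection of the two restricted linear systems (two subspaces of the space of degree-$m$ curves), whose class is $mh-me_1-ae_9$. The lemma's third summand is $h^i(Y_0,R_0)$ for this intersection class; your sequence, even granting its exactness, produces contributions of the form $h^i(Y_0,R)$ for a restriction class $R$, which are numerically different (e.g.\ $h^0(Y_0,mh-me_1)=m+1$ versus $h^0(Y_0,R_0)=m-a+1$). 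So the long exact sequence you write down cannot output the stated formula without a further, unaddressed comparison between the cohomology of $R$ and of $R_0$; this discrepancy also infects your Euler-characteristic identity, which you claim is ``immediate.''

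The second gap is that the step you defer --- controlling the connecting homomorphisms --- is the entire content of the statement, and your own discussion shows it cannot be settled by ``standard vanishing'': you correctly observe that already at $i=0$ the connecting map is nonzero (its kernel is $R_0\subsetneq H^0(R)$), so the long exact sequence does not break into short exact pieces, and one would have to compute the ranks of these maps explicitly, which you do not do. The paper's proof avoids this machinery altogether: the $i=0$ case holds by construction, because the limit $D''_0$ is \emph{defined} as a fibred product, so its sections are matching pairs and $h^0$ decomposes as (kernel)$+$(kernel)$+$(intersection of images), which is exactly $h^0(\hat{D}^{\PP_0})+h^0(\hat{D}^{\F_0})+h^0(Y_0,R_0)$; the same additivity holds for $\chi$; and since all four sheaves involved have vanishing cohomology in degrees $i\ge2$, the case $i=1$ follows by subtracting the $h^0$ identity from the $\chi$ identity. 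This elimination of the $h^1$ case by pure arithmetic is the key trick your proposal misses, and without it (or an honest computation of the connecting maps for the \emph{correct} third term) the proof does not close.
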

\begin{proof}
Notice first of all that the assertion holds if we replace $h^i$ by $\chi$, the Euler characteristic. 
The equality holds for $i=0$ by construction. Indeed, the divisor  $D''_0$ on $\widetilde{X}''_0$, 
or its associated line bundle, is obtained as fibred product of  $D^{\FF_0}$ and $D^{\PP_0}$ over $R_0$, see Section \ref{degeneration technicalities}. 
Finally, since all  cohomology groups with $i\ge2$ vanish, the assertion holds for $i=1$.
\end{proof}

\begin{lemma}\label{semicontinuity}
In the notation of above,  we have
$$h^i(\textrm{Bl}_{p_1,\dots,p_s}(\PP^n),D)\le h^i(\widetilde{X}'_0, D'_0)\le h^i(\widetilde{X}''_0, D''_0), \  i=0,1.$$
\end{lemma}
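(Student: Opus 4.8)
The plan is to deduce each of the two inequalities from the upper semi-continuity of cohomology in a flat family, exactly the principle recalled at the end of Section~\ref{degeneration technicalities}. Since the semi-continuity theorem holds in every cohomological degree, I would apply it directly to $h^0$ and to $h^1$ on each of the two families $\widetilde{\mathcal{X}}'\to\Delta$ and $\widetilde{\mathcal{X}}''\to\Delta'$ separately, rather than passing through Euler characteristics; the chain of inequalities in the statement is then obtained by concatenating the two resulting estimates (recall that $(n,s)=(3,9)$ throughout).

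For the first inequality I would work with the family $\widetilde{\mathcal{X}}'\to\Delta$ of Section~\ref{first degeneration}, which is flat because it is built by blowing up the smooth, integral total space $\mathcal{V}=\PP^3\times\Delta$ first at the point $p_0\in V_0$ and then along the horizontal curves $\{p_i(t)\}_{t\in\Delta}$, all of which are smooth centres dominating $\Delta$. The twisted bundle $\mathcal{O}_{\mathcal{X}'}(d)\otimes\mathcal{O}_{\mathcal{X}'}(-\delta\PP)$, pulled back and corrected by the exceptional divisors $\mathcal{E}_i$, defines a line bundle on $\widetilde{\mathcal{X}}'$ that is flat over $\Delta$; as recorded in Section~\ref{degeneration of linear system}, its restriction to the general fibre $\widetilde{X}'_t\cong\textrm{Bl}_{p_1,\dots,p_9}(\PP^3)$ is $D$ (the twist by $\PP$ being trivial off the central fibre, since $\PP$ is a component of $X'_0$), while its restriction to $\widetilde{X}'_0$ is the limit $D'_0$. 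Semi-continuity then yields $h^i(\textrm{Bl}_{p_1,\dots,p_9}(\PP^3),D)\le h^i(\widetilde{X}'_0,D'_0)$ for $i=0,1$, which is the first inequality.

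For the second inequality I would use the family $\widetilde{\mathcal{X}}''\to\Delta'$ of Section~\ref{second degeneration}: here the ambient threefold does not degenerate, as the total space is the trivial family $X'_0\times\Delta'$ blown up along the horizontal sections $\{p_i(s)\}_{s\in\Delta'}$, and only the marked points move, with $p_1$ and $p_9$ specializing onto $Y$. Consequently the divisor class $D'_0$ extends to a line bundle on the total space restricting to $D'_0$ on every general fibre $\widetilde{X}''_s\cong\widetilde{X}'_0$ and to the limit $D''_0$ on the central fibre, so a second application of semi-continuity gives $h^i(\widetilde{X}'_0,D'_0)\le h^i(\widetilde{X}''_0,D''_0)$ for $i=0,1$. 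The one point that deserves genuine care, and which I expect to be the main obstacle, is the flatness of this second family together with the identification of $D''_0$ as the flat limit of $D'_0$: the total space $X'_0\times\Delta'$ is already singular along $Y\times\Delta'$, and the centres $\{p_1(s)\}$, $\{p_9(s)\}$ limit onto this singular locus, so I would need to verify that blowing up these smooth sections keeps the family flat over $\Delta'$ and produces exactly the reducible central fibre $\widetilde{X}''_0=\FF_0\cup\PP_0$ carrying the Picard and intersection data of Section~\ref{intersection table}, so that the fibred-product description of $D''_0$ indeed realizes it as the restriction of the extended line bundle. This is the same kind of verification that underlies the analogous plane and higher-dimensional degenerations of \cite{CM1,CM2,Po}, and once it is secured both semi-continuity estimates are immediate.
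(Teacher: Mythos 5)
Your proposal is correct and rests on the same two-step semi-continuity strategy as the paper, but your treatment of $i=1$ is genuinely different. The paper invokes semi-continuity only for $i=0$, i.e.\ for dimensions of linear systems, and then gets the $i=1$ inequalities indirectly: since the Euler characteristic is constant in the two flat families, $\chi(\textrm{Bl}_{p_1,\dots,p_9}(\PP^3),D)=\chi(\widetilde{X}'_0,D'_0)=\chi(\widetilde{X}''_0,D''_0)$, and since all cohomology in degree $\ge 2$ vanishes on the three spaces, one has $h^1=h^0-\chi$ throughout, so the $h^1$ chain of inequalities follows from the $h^0$ chain. You instead apply the semi-continuity theorem in degree $1$ directly on each family. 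That is perfectly legitimate — the theorem holds in every cohomological degree for a proper flat family carrying a line bundle — and it buys you independence from the vanishing of $h^{\ge 2}$, which the paper asserts without proof (here and in Lemma \ref{formula h0 central divisor}); the price is that you need the full semi-continuity theorem rather than only the elementary degree-zero statement about limits of divisors in linear systems. Both routes require flatness of the two families and of the limit bundles, so the verification you single out as the main obstacle — that blowing up the sections $\{p_1(s)\}$ and $\{p_9(s)\}$, which limit into the singular locus $Y\times\Delta'$ of $X'_0\times\Delta'$, yields a flat family whose central fibre is $\FF_0\cup\PP_0$ with the Picard and intersection data of Section \ref{intersection table} — is equally needed by, and equally implicit in, the paper's own three-line proof; it belongs to the constructions of Sections \ref{second degeneration} and \ref{degeneration of linear system} rather than to this lemma, so flagging it does not constitute a gap relative to the paper.
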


\begin{proof}
The inequalities hold for $i=0$ by the property of upper semi-continuity of the two degenerations.
As $\chi(\textrm{Bl}_{p_1,\dots,p_9}(\PP^3), D)=\chi(\widetilde{X}'_0, D'_0)=\chi(\widetilde{X}''_0, D''_0)$ 
and all higher cohomology groups vanish, the inequalities hold for $i=1$ as well.
\end{proof}

\begin{remark}
The above construction as well as Lemma \ref{formula h0 central divisor} and Lemma \ref{semicontinuity} is potentially applicable in a more general context for linear systems in any $\PP^n$ and with arbitrary number of points and multiplicities by choosing different specializations and twists, as it was done for instance by the last author in \cite{Po}. Nevertheless, it is not easy to find a good degeneration in general.
\end{remark}

\subsection{Proof of Theorem \ref{theorem quasi-homogeneous}}
In order to prove the theorem, we need the following result. 

\begin{proposition}\label{hatD_F and D_F}
 The following linear systems are non-special with dimension equal to the virtual dimension:
 $\ls_{3,2m}(m+1,m^6,m-1)$, $\ls_{3,2m-1}(m^4,(m-1)^4)$.
\end{proposition}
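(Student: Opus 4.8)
The plan is to prove that each of the two linear systems is non-special by exhibiting that it has the expected dimension, using the degeneration machinery already set up in Section \ref{degeneration} together with the classification of linearly non-special systems from \cite{BDP,DP}. These are systems in $\PP^3$ with nine and eight points respectively, of near-homogeneous multiplicities, so the first step is simply to compute their virtual dimensions and to check that the expected dimension is governed purely by linear special effect varieties, i.e. that the systems are \emph{linearly non-special}. For $\ls_{3,2m}(m+1,m^6,m-1)$ one computes $\vdim$ via the formula in the introduction, and similarly for $\ls_{3,2m-1}(m^4,(m-1)^4)$; I expect both virtual dimensions to be non-negative for the relevant range of $m$, so the claim reduces to showing $h^1=0$.

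The main tool I would use is Cremona reduction (Section \ref{cremona}) to lower the degree and multiplicities. For the first system, set $c=m_1+m_2+m_3+m_4-2d = (m+1)+m+m+m-4m = 1$, so one Cremona step based at the four points of multiplicity $m+1$ and three of the $m$'s lowers the degree and brings the multiplicities down; iterating should terminate either at an empty system or at a system with few points that is manifestly non-special (for instance, a system with at most eight points, handled by \cite{volder-laface}, or a very low-degree system). For the second system, $\ls_{3,2m-1}(m^4,(m-1)^4)$, the Cremona constant based at four of the multiplicity-$m$ points is $c=4m-2(2m-1)=2$, so a single Cremona transformation reduces it substantially; I would track the orbit and reduce to a base case. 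The key structural point is that, since we already know from the linear theory \cite{BDP,DP} that the only candidate obstructions here are linear cycles (lines through pairs or triples of points, planes through points), one verifies directly that no such line or plane sits in the base locus with enough multiplicity to force speciality — equivalently, that the linear expected dimension coincides with the virtual dimension.

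An alternative, and perhaps cleaner, route is to apply the degeneration of Section \ref{first degeneration}--\ref{second degeneration} directly: split the points between the two components $\FF$ and $\PP$, use Lemma \ref{formula h0 central divisor} and Lemma \ref{semicontinuity} to bound $h^1$ from above by the sum of $h^1$ of two lower-complexity systems on $\FF_0\cong\textrm{Bl}\,\PP^3$ and $\PP_0\cong\PP^3$ together with a restricted system on $Y_0$, and choose the twist $\delta$ and the distribution of points so that each piece is non-special by induction on $m$. Since the statement of Proposition \ref{hatD_F and D_F} is flagged in its name as describing exactly the $\hat D^{\FF}$ and $D^{\FF}$ contributions, I suspect the intended proof is a clean induction feeding these systems back into the degeneration setup, with the base cases of small $m$ checked by hand or by \Mac\ as described in Section \ref{Mac}.

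The hard part will be controlling the base cases and ensuring the induction closes: after Cremona reduction or degeneration the point-distribution can create configurations where a linear cycle (most likely the line through two of the high-multiplicity points, or a plane through several points) threatens to enter the base locus, and one must check that its contribution is exactly accounted for by the linear expected dimension rather than producing extra speciality. In particular, for $\ls_{3,2m}(m+1,m^6,m-1)$ the asymmetry of the multiplicities makes the Cremona orbit less transparent, so verifying termination at a genuinely non-special base case — and confirming no quadric or other nonlinear obstruction sneaks in, which the overarching theme of the paper warns against precisely in $\PP^3$ — is the delicate point. I would expect to discharge the smallest few values of $m$ computationally and run the inductive step uniformly for larger $m$.
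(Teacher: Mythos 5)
Your proposal is correct and is essentially the paper's own argument: the paper notes that $\vdim=0$ for both systems, checks the base case $m=1$ by hand, and concludes by induction on $m$ via successive Cremona transformations. In fact the two steps you computed assemble the induction immediately — the $c=1$ transform of $\ls_{3,2m}(m+1,m^6,m-1)$ is exactly $\ls_{3,2m-1}(m^4,(m-1)^4)$, and the $c=2$ transform of the latter is the same system with $m$ replaced by $m-1$ — so the orbit closes on itself, every system in it has $\vdim=0$, and no further case analysis or worry about nonlinear obstructions is needed.
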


This can be easily deduced from \cite{volder-laface} where the authors deal with
Cremona reduced linear systems with eight base multiple points.
However, we include the proof here for the sake of completeness.

\begin{proof}
One can easily check that $\vdim(\ls_{3,2m}(m+1,m^6,m-1))=0$.  
The statement is obviously true for $m=1$.
 By performing two subsequent  Cremona transformations of $\PP^3$ (see Section \ref{cremona})
we reduce from $m$ to $m-1$. Hence we conclude 
by induction on $m$.

Similarly, one proves that $\dim(\ls_{3,2m-1}(m^4,(m-1)^4))=
\vdim(\ls_{3,2m-1}(m^4,(m-1)^4))=0$ by induction on $m$.

\end{proof}

\begin{proof}[Proof of Theorem \ref{theorem quasi-homogeneous}]
Let $Q=\ls_{3,2}(1^9)$ be the quadric surface through the nine base points. The obvious inclusion of linear systems $\ls-aQ\subseteq \ls$ implies the inequality $\dim(\ls-aQ)\le \dim(\ls)$. But $\ls-aQ=\ls_{3,2(m-a)}((m-a)^8)$ and, by Proposition \ref{hatD_F and D_F}, 
 $\dim(\ls_{3,2(m-a)}((m-a)^8))=\vdim(\ls_{3,2(m-a)}((m-a)^8))=m-a+1\ge 1$. 
Hence $m-a+1\le \dim(\ls(m,a))$.
 
 We prove the inverse inequality by degeneration. 
Let $D$ denote the
 divisor in $\textrm{Bl}_{p_1,\dots,p_9}(\PP^3)$ corresponding to $\ls(m,a)$:
\begin{align*}
D&=2mH-m\sum_{i=1}^8E_i-aE_9.\\
\intertext{In the notation of Section \ref{degeneration of linear system}, now with $d=2m$, choose $\delta=m$.
In the space $\widetilde{\mathcal{X}}''$ of the second degeneration, we have the 
following divisors on the components of the central fibre $\widetilde{X}''_0$:
}
D^{\F_0}&=2mH^\FF-m E_0-m\sum_{i=1}^{7} E_i,\\
D^{\PP_0}&=mH^\PP-mE_8-aE_9.\\
\intertext{We consider the restriction maps to $Y_0$, 
and we obtain the following kernel divisors
}
\hat{D}^{\F_0}&=2mH^\FF-(m+1) E_0-(m-1)E_1-m\sum_{i=2}^{7} E_i,\\
\hat{D}^{\PP_0}&=(m-1)H^\PP-mE_8-(a-1)E_9.
\end{align*}

Firstly, on the component $\FF_0$ of the central fibre we have the following.
By Proposition \ref{hatD_F and D_F}, we obtain that both $D^{\F_0}$ and 
$\hat{D}^{\F_0}$ are non-special, so that the first cohomology groups vanish; 
moreover, the second is non-effective, namely $h^0=0$. 

Secondly, on the exceptional component of $\widetilde{X}''_0$ we have the following.
The divisor $D^{\PP_0}$ is (only) linearly obstructed and has $h^1(D^{\PP_0})={{a+1}\choose3}$ caused by a line of multiplicity $a$, see \cite{BDP}. 
Moreover the kernel $\hat{D}^{\PP_0}$ is non-effective and has $h^1(\hat{D}^{\PP_0})={{a+1}\choose3}$, see \cite{DP}.

The above implies that both 
$D^{\F_0}$ and $D^{\PP_0}$ cut the complete linear series
 on the intersection $Y_0$ of the components. 
Using Sections \ref{intersection table} and \ref{degeneration of linear system} and the notation there introduced, we have 
$$
R^{\PP_0}=mh-ae_9,\quad
R^{\FF_0}=mh-me_1.
$$
Since $R^{\F_0}$ and $R^{\PP_0}$ meet transversally on $Y_0$, their intersection is given by  
$$R_0=mh-me_1-ae_9.$$
One computes  $h^0(R_0)=m-a+1$ and $h^1(R_0)={{a}\choose2}$, 
the speciality being given 
by a line of multiplicity $a$, see \cite{BDP}.
Finally, by Lemma \ref{formula h0 central divisor} we obtain $h^0(D''_0)=m-a+1$ and
$h^0(D''_0)={{a+1}\choose3}+{{a}\choose2}$.
Now we conclude the proof of the first part of the theorem
by upper semi-continuity, see Lemma \ref{semicontinuity}.

To prove the last sentence of the theorem, we simply notice that the linear system $\ls(m,a)$ splits as follows: $$\ls(m,a)=aQ+\ls_{3,2(m-a)}((m-a)^8).$$
 The second addend in the right hand side is the moving part of $\ls$ and is non-special by Proposition \ref{hatD_F and D_F}. This concludes the proof. 
\end{proof}

\begin{remark}
In the proof of Theorem \ref{theorem quasi-homogeneous}, 
we argued that
 the speciality of $\ls(m,a)$ is given by $aQ$ and equals  the speciality of the limit
 $D''_0$ that, using Lemma \ref{formula h0 central divisor},
is given by a line of multiplicity 
$a$ in the base locus of $\hat{D}^{\PP_0}$ and a line of multiplicity $a$ in the base locus
 of $R_0$.

A geometric interpretation is the following.
 Let us denote by $\ls^{\PP_0,\bf{m}}$ the \emph{matching linear system} 
defined by the matching conditions imposed by $R^{\FF_0}$ to $R^{\PP_0}$, 
so that we have the following exact sequence of sheaves
$$
0\to |\hat{D}^{\PP_0}|\to \ls^{\PP_0,\bf{m}}\to |R_0|\to0.
$$

The emptiness of $|\hat{D}^{\PP_0}|$ implies that the limit linear system 
$|D''_0|$ is the matching linear system; it is of the form $\ls_{3,m}(m,m,a)$ 
where the second and third points, $p_8,p_9$, are general in $\PP_0$, while the first point, $p_1$, is on the intersection and is the one giving the matching.

In particular, if we follow the quadric $Q=\ls(1,1)$ in the degeneration process, by simply setting $m=a=1$ in the above, we see that its limit is given by a matching linear system on $\PP_0$ of the form $\ls_{3,1}(1,1,1)$, based at the points  $p_8,p_9$ and $p_1$ of the central fibre as above.  This linear system has only one element that is the plane spanned by the three points. This plane is
 the special effect variety for the limit of $\ls(m,a)$; it is  contained
 with multiplicity $a$ in the base locus 
 and it contributes by ${{a+1}\choose 3}+{a\choose2}$ to the speciality.
\end{remark}

A \emph{weak base locus lemma} for the quadric $Q$ in the case of $\ls(m,a)$ 
is just an easy 
application of Lemma \ref{equivalence-quadric}. 
  Indeed to prove that $Q$ is contained in the base locus of $\ls(m,a)$ with 
multiplicity at least $a$, it is enough to show that 
for every $m$ and $a$, the restriction $\ls(m,a)|_Q$ is empty and this is equivalent to 
prove that $\ls_{2,3m}(m^9,a)$ is empty, which is a well-known fact.

In the next section we will see that in general to obtain such a result is extremely difficult, mostly
because of the very little knowledge of linear systems on $\PP^1\times\PP^1$.

\section{Homogeneous linear systems  $\LL_{3,2m+1}(m^9)$}
\label{section degree 2m+1}

In this section we consider linear systems with nine points of multiplicity $m$ and degree $2m+1$.
For this class of linear system it is more difficult to understand the relation between the speciality and the 
presence of the quadric as special effect variety. Even to compute
the multiplicity of containment of the quadric in the base locus 
is not an obvious task. The main result of this section is in fact a vanishing result for linear systems in $\PP^2$ which allows to deduce a base locus lemma for the quadric.

Given a linear system $\LL=\LL_{3,d}(m_1,\ldots,m_s)$ with $s\ge9$, let $Q$ be the unique quadric surface through the first nine points. Consider the restriction exact sequence
\begin{equation}\label{restriction-to-quadric}
0\to\LL-Q\to\LL\to\LL_{|Q}\to0.
\end{equation}

The linear system $\LL_{|Q}$ is contained in the linear system 
of the curves of bidegree $(d,d)$ in
 $Q\cong\PP^1\times\PP^1\subset\PP^3$
with nine multiple points and denoted by $\LL_{\PP^1\times\PP^1,(d,d)}(m_1,\ldots,m_9)$ as in Section \ref{linear systems on Q}.

By Lemma \ref{equivalence-quadric} we know that the system 
$\LL_{\PP^1\times\PP^1(d,d)}(m_1,\ldots,m_9)$ has the same dimension 
as the system $\ls_{2,2d-m_1}(d-m_1,d-m_1, m_2,\dots,m_s)$
in $\PP^2$ and in particular the first system is empty if and only if the second one is.

The main part of this chapter is devoted to prove, via degeneration techniques, emptiness results for linear systems 
in $\PP^2$ with ten multiple points.
As a straightforward consequence of Theorem \ref{theorem empty} below, we obtain the following (weak) base locus lemma for the quadric.
Let $\alpha$ be any positive integer.

\begin{theorem}[Quadric base locus lemma]\label{quadric base locus}
Let $\ls=\ls_{3,2m+\alpha}(m^9,m_{10}\ldots, m_s)$ be a non-empty linear system. 
If $m> 9\alpha$, then the quadric $Q$ through the first nine points is contained in the base locus of $\ls$.
\end{theorem}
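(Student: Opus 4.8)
The plan is to reduce the claim about the quadric in $\PP^3$ to an emptiness statement for a linear system of plane curves with ten multiple points, and then prove that emptiness by degeneration. The logical skeleton is dictated by the restriction exact sequence \eqref{restriction-to-quadric}. To show that $Q$ lies in the base locus of $\ls=\ls_{3,2m+\alpha}(m^9,m_{10},\dots,m_s)$, it suffices to show that the restriction $\ls_{|Q}$ is empty, since then every section of $\ls$ must vanish on all of $Q$, i.e.\ $\ls=\ls-Q$. So the first step is to fix notation and observe that $\ls_{|Q}$ is contained in $\ls_{\PP^1\times\PP^1,(d,d)}(m^9,\dots)$ with $d=2m+\alpha$; dropping the last $s-9$ points only enlarges the system, so it is enough to prove emptiness of $\ls_{\PP^1\times\PP^1,(2m+\alpha,2m+\alpha)}(m^9)$.

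Next I would invoke Lemma \ref{equivalence-quadric} to transport this to the plane. With $d_1=d_2=d=2m+\alpha$ and the designated point having multiplicity $m$ (which satisfies $m\le d$), the lemma gives
$$
\dim\bigl(\ls_{\PP^1\times\PP^1,(d,d)}(m,m^8)\bigr)=\dim\bigl(\ls_{2,2d-m}(d-m,d-m,m^8)\bigr)=\dim\bigl(\ls_{2,3m+2\alpha}((m+\alpha)^2,m^8)\bigr).
$$
Thus the whole statement collapses to: the plane system $\ls_{2,3m+2\alpha}((m+\alpha)^2,m^8)$, with ten general points, is empty whenever $m>9\alpha$. This is precisely the kind of Nagata-type emptiness assertion that the section announces it will prove as Theorem \ref{theorem empty}. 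Since I am permitted to assume results stated earlier in the excerpt, I would at this stage simply cite Theorem \ref{theorem empty} (the degeneration-based emptiness theorem for ten-point plane systems) to conclude emptiness, and hence that $Q$ is a fixed component of $\ls$.

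\textbf{The main obstacle} is, of course, hidden inside the reduction: it is the emptiness of the ten-point plane system itself, which is the genuine mathematical content and is exactly why the authors develop a dedicated degeneration technique. Heuristically one checks that the bound $m>9\alpha$ forces the virtual dimension of $\ls_{2,3m+2\alpha}((m+\alpha)^2,m^8)$ to be negative, but negative virtual dimension does not imply emptiness for ten points in $\PP^2$ (the obstruction being Nagata-type special systems, e.g.\ multiples of the unique cubic through nine of the points), so a delicate argument is required. The strategy I would follow mirrors \cite{CDMR,CM3}: degenerate $\PP^2$ to a union of two components meeting along a line, distribute the ten points between the components, choose the twist so that the limit systems on each piece are tractable, and run an inductive semicontinuity argument (as codified in Section \ref{degeneration technicalities}) on $m$ or $\alpha$ to force $h^0=0$ in the limit. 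The quantitative hypothesis $m>9\alpha$ is what guarantees enough room to split the points favourably at each inductive step and to keep the restricted systems on the double line from acquiring unwanted sections.

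Finally I would remark that the hypothesis that $\ls$ is \emph{non-empty} is used only to make the statement non-vacuous: the real work is the emptiness of the restriction, and the containment $\ls=\ls-Q$ then follows formally from \eqref{restriction-to-quadric}. This shows $Q$ is contained in the base locus with multiplicity at least one, which is the asserted weak base locus lemma.
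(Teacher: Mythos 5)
Your proposal is correct and follows essentially the same route as the paper: restrict to $Q$ via the exact sequence \eqref{restriction-to-quadric}, note $\ls_{|Q}\subseteq\ls_{\PP^1\times\PP^1,(2m+\alpha,2m+\alpha)}(m^9)$, transport this to $\ls_{2,3m+2\alpha}((m+\alpha)^2,m^8)$ by Lemma \ref{equivalence-quadric}, and conclude emptiness from Theorem \ref{theorem empty}. The paper likewise presents the base locus lemma as a direct consequence of Theorem \ref{theorem empty}, whose degeneration proof (in the style of \cite{CDMR,CM3}) carries the real content, exactly as you observe.
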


We remark that
a major difference between the quadric through nine points in $\PP^3$ 
and the linear cycles in $\PP^n$ is in the geometry of their normal bundles. 
For the last ones the normal bundles are toric bundles so we understand
 their cohomology groups \cite{BDP,DP} 
while for the first one the cohomological information is highly non-trivial.

\subsection{Emptiness of linear systems with ten points in $\PP^2$}
\label{section emptiness}

The goal of this section is to find a good bound for $m$ to have emptiness of 
certain linear systems in $\PP^2$. 
More precisely we will prove the following result, which implies Theorem \ref{quadric base locus}.

\begin{theorem}\label{theorem empty}
The linear system $\LL=\LL_{2,3m+2\alpha}((m+\alpha)^2, m^{8})$ is empty for any $m>9\alpha$.
\end{theorem}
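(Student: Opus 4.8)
The plan is to prove the emptiness by the Ciliberto--Miranda degeneration of the plane, following the strategy of \cite{CM1,CM2}. The starting observation is that $\LL$ is numerically Cremona reduced: applying the standard Cremona transformation of $\PP^2$ (Section \ref{cremona}) to the two points of multiplicity $m+\alpha$ together with one point of multiplicity $m$ gives $c=2(m+\alpha)+m-(3m+2\alpha)=0$, and every other choice of three points gives $c\le 0$. Hence no Cremona transformation lowers the degree, and a degeneration is unavoidable.

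First I would degenerate $\PP^2$ as in Section \ref{degeneration technicalities}: take the trivial family $\PP^2\times\Delta$ and blow up a point of the central fibre, so that the central fibre becomes $\PP\cup\F$, where $\PP\cong\PP^2$ is the exceptional component, $\F\cong\textrm{Bl}_{p_0}\PP^2$ is the proper transform, and the two components meet along a line $Y\cong\PP^1$, which is a line in $\PP$ and the exceptional class $E_0$ on $\F$. I would twist $\OO(3m+2\alpha)$ by $-\delta\PP$, so that it restricts to $\OO_\PP(\delta)$ and to $\OO_\F((3m+2\alpha)H-\delta E_0)$, and distribute the ten points by placing four simple points on $\PP$ and the two points of multiplicity $m+\alpha$ together with the remaining four simple points on $\F$. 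The limit line bundle then yields, exactly as in Section \ref{degeneration technicalities} (compare Lemma \ref{formula h0 central divisor}), the bound
\[
h^0(\LL)\le h^0(\hat\LL^\PP)+h^0(\hat\LL^\F)+h^0(R_0),
\]
where $\hat\LL^\PP$ and $\hat\LL^\F$ are the kernels of the two restriction maps to $Y$ and $R_0$ is the matching series on $Y$.

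The key simplification is to choose $\delta=2m-1$, the largest value for which $\LL^\PP=\LL_{2,\delta}(m^4)$ is empty (its virtual dimension is $-m$, and four general points lie in a well-understood range). Then $\LL^\PP$ is empty, so both its kernel $\hat\LL^\PP=\LL_{2,\delta-1}(m^4)$ and its restriction to $Y$ vanish; in particular $R_0\subseteq\LL^\PP|_Y=0$. This reduces the problem to the emptiness of the single seven-point system $\hat\LL^\F=\LL_{2,3m+2\alpha}((2m)_{p_0},(m+\alpha)^2,m^4)$ on $\F$. Crucially, unlike $\LL$ itself, this system is no longer Cremona reduced: transforming at the three points of multiplicities $2m,m+\alpha,m+\alpha$ gives $c=m>0$ and produces $\LL_{2,2m+2\alpha}(m^5,\alpha^2)$. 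I would then finish by iterating Cremona reductions and invoking the classification of special linear systems with at most nine general points in $\PP^2$, treating the finitely many small cases by Cremona reduction and by the computer-aided arguments of Lemma \ref{low-degrees} and Lemma \ref{ten-points}.

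The main obstacle I expect is twofold. First, one must verify that the seven-point system on $\F$, together with the entire chain of its Cremona transforms, is genuinely empty in the full range $m>9\alpha$ rather than only for large $m$; this is exactly where the hypothesis $m>9\alpha$ enters, since it keeps every member of the Cremona chain in the non-effective regime and controls when a multiplicity first turns negative. Second, one must ensure that the degeneration and the chosen point distribution are compatible, i.e.\ that the points are general on each component and that the two restricted series meet transversally along $Y$, so that the displayed inequality holds and is sharp enough. This transversality, together with the precise choice of the twist $\delta$ and of which points to send to each component, is the delicate technical heart of the argument.
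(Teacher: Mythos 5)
Your first degeneration is exactly the paper's first degeneration (four $m$-points on the exceptional plane $\PP$, the two $(m+\alpha)$-points and four $m$-points on $\F$), and your choice $\delta=2m-1$ correctly kills $\LL^{\PP}$, $\hat{\LL}^{\PP}$ and $R_0$. But the step you defer — ``the seven-point system on $\F$, together with the entire chain of its Cremona transforms, is genuinely empty for $m>9\alpha$'' — is false, and this is where the proof collapses. Run the Cremona chain you yourself propose on $\hat{\LL}^{\F}=\LL_{2,3m+2\alpha}(2m,(m+\alpha)^2,m^4)$:
\[
\LL_{2,3m+2\alpha}(2m,(m+\alpha)^2,m^4)\to\LL_{2,2m+2\alpha}(m^5,\alpha^2)\to\LL_{2,m+4\alpha}(m^2,(2\alpha)^3,\alpha^2)\to\LL_{2,6\alpha}((2\alpha)^4,4\alpha-m,\alpha^2).
\]
Discarding the negative multiplicity $4\alpha-m$ (which does not change $h^0$, since the corresponding exceptional curve is a fixed component) leaves $\LL_{2,6\alpha}((2\alpha)^4,\alpha^2)$, whose virtual dimension is $9\alpha^2+4\alpha+1>0$. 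Hence $h^0(\hat{\LL}^{\F})\ge 9\alpha^2+4\alpha+1$ for every $m$, no matter how large, and semicontinuity gives no emptiness whatsoever. Geometrically the culprit is the cubic $C=\LL_{2,3}(2,1^6)$ on $\F$ (double at $p_0$, through the six points): $\hat{\LL}^{\F}\cdot C=4\alpha-m<0$, so this $(-1)$-curve splits off $m-4\alpha$ times and leaves the effective residual $\LL_{2,14\alpha}(8\alpha,(5\alpha)^2,(4\alpha)^4)$. Changing the twist does not help: for $\delta\le 2m$ the same splitting occurs, while for $\delta\ge 2m+1$ one has $\hat{\LL}^{\PP}\supseteq\LL_{2,2m}(m^4)\ne\emptyset$. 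This is precisely the content of Remark \ref{geom description}: in the first degeneration, for the ratio $d/m$ near $3$, \emph{every} choice of twisting parameter yields a non-empty limit, so no single-parameter Ciliberto--Miranda degeneration can prove the theorem.

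The missing idea — the actual heart of the paper's proof — is the second degeneration, the \emph{$2$-throw} of $C$: one blows up $C$ twice in the total space $\mathcal{X}'$ and contracts the resulting $\PP^1\times\PP^1$-divisor $\mathbb{S}$ in the other ruling, so that $C$ is flopped out of $\F$ and replaced by infinitely near conditions $[a,b]$ on $\PP$ and a new plane component $\TT$. The limit bundles then depend on two twisting parameters $a,b$ (Proposition \ref{limit}), and emptiness follows from two incompatible necessary conditions: non-emptiness on $\F$ forces $a+b\le m$, while non-emptiness on $\PP$ — after splitting off the four quartic $(-1)$-curves $\LL_{2,4}(2^3,1,[1,1]^2)$ and performing four Cremona transformations — forces the residual degree to be nonnegative, i.e. $a+b\ge 5m-36\alpha$. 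These contradict each other exactly when $m>9\alpha$. So your framework is the right starting point, but the bound $m>9\alpha$ does not come from keeping a Cremona chain non-effective (it never becomes non-effective); it comes from the interplay of the two components after the flop, which your proposal never performs.
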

{We will prove this result via degeneration techniques similar to the ones introduced in Section \ref{degeneration}. More precisely, we will 
simultaneously degenerate the blown-up projective plane at ten points in general position
and the line bundle $\LL$.}
Even though this technique was applied before in \cite{CDMR, CM3} for homogeneous linear 
systems with ten points, we will present here in detail our approach. 

\subsubsection{The first degeneration}
\label{sec:firstdeg}

{By blowing-up a point in the central fibre of a trivial family of projective planes over a disc, $\Delta$, one obtains a new family, call it $\mathcal{X} \to \Delta.$ The fibre over zero,  $X_0$, decomposes as the union of two surfaces, a projective plane denoted by $\PP$
and the Hirzebruch surface ${\mathbb{F}}_ {1}$, call it $\FF$. In this notation $\PP$ represents the exceptional divisor of the blown-up point, while $\FF$ is the proper transform of the central fibre of the original family. We will denote by $E$ the curve of intersection between $\PP$ and $\FF$.}

{Consider now ten points on the general fibre of the trivial family of planes, such that four of them collide in the zero fibre.
Correspondingly, on the central fibre of $\mathcal{X}$, we place six points on $\FF$ and four points on $\PP$ and we consider them as ten limit points of general points on $X_t$. Blowing-up these ten sections
of $\mathcal{X}$ creates a new family $\mathcal{X}'\to \Delta$. The fibre over zero consists of two surfaces, $\PP_0$ and $\FF_0$, that intersect along a double curve, $E$.
The component $\PP_0$ represents a blown-up plane at four general points, $\FF_0$ represents the blown-up ruled surface ${\mathbb{F}}_ {1}$, at six general points while the double curve $E$ is the negative section on the component ${\mathbb{F}}_ {0}$ and also represents the class of a line on $\PP_0$.
The general fibre $X'_t$ is the blown-up projective plane at ten general points.

\begin{remark}
We point out that colliding four points in the zero fibre works well for the 
analysis of linear systems with ten points. In general, by colliding $s'$ points from a collection of $s$ general points, one produces a degeneration of the blown-up projective plane at $s$ general points, i.e. the general fibre $X'_t$, to the union of two surfaces (for any choice of $s'$ and $s$). The components of the central fibre are: $\PP_0$ that is a blown-up plane at $s'$ general points, and $\FF_0$ that is the blown-up ruled surface ${\mathbb{F}}_ {1}$ at $s-s'$ general points, the two surfaces meeting along a double curve.
\end{remark}

\subsubsection{The second degeneration}
\label{sec:secdeg}
 
This degeneration was first introduced in \cite{CM3}, we provide the construction of the degeneration together with
the limit bundles computation for the sake of completeness. The interested reader should also consult
\cite{CDMR}. We denote by $C$ the unique $(-1)$-curve on $\FF$ passing through six points that 
meets the double curve $E$ in two points $p_1$ and $p_2$, at the form ${\mathcal{L}}_{3}( 2,1^{6})$. We consider the family obtained in Section \ref{sec:firstdeg}, $\mathcal{X}'\to \Delta$, and we blow-up twice the cubic $C$ on $\FF$ and then contract the first exceptional divisor created.
In this way, we will obtain a new family $\mathcal{X}''\to \Delta$ whose general fibre is still a plane blown-up at ten points and whose special fibre over the origin becomes the union of four surfaces. We abuse notations and denote by $\FF$ and $\PP$ the surfaces of the central fibre in the second degeneration and by $\mathbb{S}$ and $\TT$ the exceptional divisors created by the double blow-up of $C.$

{The first blow-up of $C$ in the threefold $\mathcal{X}'$ creates as exceptional divisor a 
Hirzebruch surface ${\mathbb{F}}_1$, that we will denote by $\TT$. 
The rational curve $C$ represents the intersection between $\TT$ and $\FF$. 
In particular, $C$ represents the $(-1)$-curve of $\TT$, while $G_i$ represent the fibre 
class on $\TT$, see Figure 2.}

{The second blow-up of $C$ in $\mathcal{X}$, creates the exceptional divisor $\mathbb{S}$ that is isomorphic to $\PP^1\times\PP^1$
and blows-up the surface $\PP$ twice. Denote by $F_1$ and $F_2$ the exceptional divisors introduced on $\PP$. Notice that the proper transforms of $G_1$ and $G_2$ become
$(-2)$-curves. We abuse the notation and denote by $G_i$ to be these proper transforms with self-intersection $(-2)$.}

{Since the normal bundle of $\mathbb{S}$ has bidegree $(-1, -1)$ one can contract the ruling direction of $\mathbb{S}$. Blowing-down $\mathbb{S}$ will affect the surfaces of the central fibre as follows. On the surface $\FF$ the cubic $C$ will get contracted, $\TT$ will become a projective plane, while on $\PP$ the $(-1)$-curves, $F_1$ and $F_2$, will get identified. In \cite{CDMR, CM3} this operation is 
called a \emph{$2$-throw} of $C$ on $\PP$, see again Figure 2.}

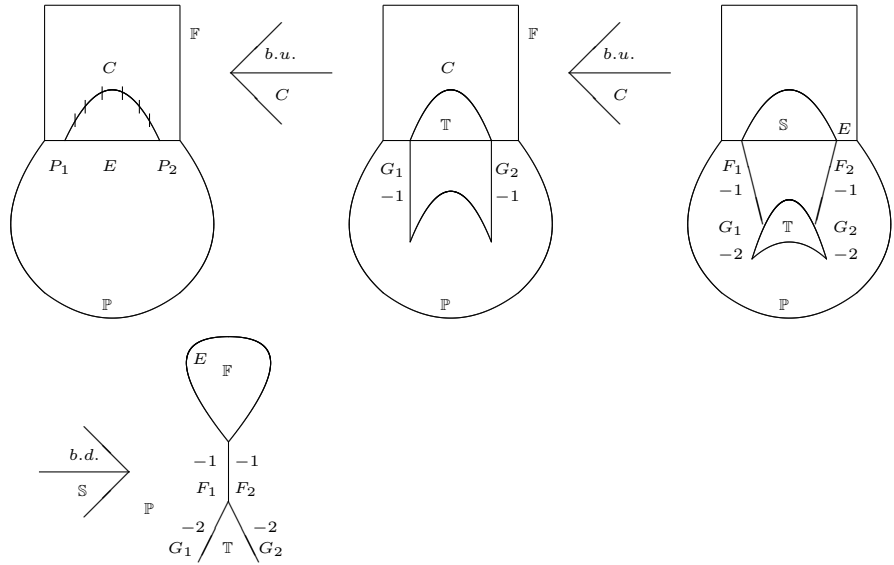
\begin{figure}[h]\label{throw}
\setlength{\unitlength}{0.45mm}
\begin{center}
\begin{picture}(250,100)(0,0)

\put(20,100){\line(1,0){40}}
\put(60,100){\line(0,-1){40}}
\put(60,60){\line(-1,0){40}}
\put(20,60){\line(0,1){40}}

\qbezier(20,60)(0,33)(20,15)
\qbezier(60,15)(80,33)(60,60)
\qbezier(20,15)(40,0)(60,15)

\qbezier(26,60)(40,90)(54,60)

\put(37,76){\line(0,-1){4}}
\put(43,76){\line(0,-1){4}}

\put(32,72){\line(0,-1){4}}
\put(48,72){\line(0,-1){4}}

\put(29,68){\line(0,-1){4}}
\put(51,68){\line(0,-1){4}}

\put(37,80){\mbox{\tiny $C$}}
\put(21,51){\mbox{\tiny $P_1$}}
\put(53,51){\mbox{\tiny $P_2$}}
\put(37,51){\mbox{\tiny $E$}}
\put(37,10){\mbox{\tiny $\PP$}}
\put(63,90){\mbox{\tiny $\FF$}}

\put(85,84){\mbox{\tiny $b.u.$}}
\put(88,72){\mbox{\tiny $C$}}
\put(75,80){\line(1,0){30}}
\put(75,80){\line(1,1){15}}
\put(75,80){\line(1,-1){15}}

\put(120,100){\line(1,0){40}}
\put(160,100){\line(0,-1){40}}
\put(160,60){\line(-1,0){40}}
\put(120,60){\line(0,1){40}}

\qbezier(120,60)(100,33)(120,15)
\qbezier(160,15)(180,33)(160,60)
\qbezier(120,15)(140,0)(160,15)

\qbezier(128,60)(140,90)(152,60)

\put(128,60){\line(0,-1){30}}
\put(152,60){\line(0,-1){30}}

\qbezier(128,30)(140,60)(152,30)

\put(137,80){\mbox{\tiny $C$}}
\put(119,50){\mbox{\tiny $G_1$}}
\put(153,50){\mbox{\tiny $G_2$}}
\put(119,42){\mbox{\tiny $-1$}}
\put(153,42){\mbox{\tiny $-1$}}
\put(137,63){\mbox{\tiny $\TT$}}
\put(137,10){\mbox{\tiny $\PP$}}
\put(163,90){\mbox{\tiny $\FF$}}

\put(220,100){\line(1,0){40}}
\put(260,100){\line(0,-1){40}}
\put(260,60){\line(-1,0){40}}
\put(220,60){\line(0,1){40}}

\qbezier(220,60)(200,33)(220,15)
\qbezier(260,15)(280,33)(260,60)
\qbezier(220,15)(240,0)(260,15)

\qbezier(226,60)(240,90)(254,60)

\put(226,60){\line(1,-4){6.2}}
\put(254,60){\line(-1,-4){6.2}}

\qbezier(229,25)(240,60)(251,25)
\qbezier(229,25)(240,35)(251,25)

\put(220,51){\mbox{\tiny $F_1$}}
\put(253,51){\mbox{\tiny $F_2$}}
\put(219,44){\mbox{\tiny $-1$}}
\put(253,44){\mbox{\tiny $-1$}}
\put(219,33){\mbox{\tiny $G_1$}}
\put(253,33){\mbox{\tiny $G_2$}}
\put(219,25){\mbox{\tiny $-2$}}
\put(253,25){\mbox{\tiny $-2$}}
\put(237,63){\mbox{\tiny $\mathbb{S}$}}
\put(238,33){\mbox{\tiny $\TT$}}
\put(237,10){\mbox{\tiny $\PP$}}
\put(254,62){\mbox{\tiny $E$}}

\put(185,84){\mbox{\tiny $b.u.$}}
\put(188,72){\mbox{\tiny $C$}}
\put(175,80){\line(1,0){30}}
\put(175,80){\line(1,1){15}}
\put(175,80){\line(1,-1){15}}

\end{picture}
\end{center}

\setlength{\unitlength}{0.4mm}
\begin{center}
\begin{picture}(250,70)(0,0)

\put(15,34){\mbox{\tiny $b.d.$}}
\put(18,22){\mbox{\tiny $\mathbb{S}$}}
\put(5,30){\line(1,0){30}}
\put(35,30){\line(-1,1){15}}
\put(35,30){\line(-1,-1){15}}

\qbezier(68,40)(40,75)(68,75)
\qbezier(68,40)(96,75)(68,75)
\put(66,62){\mbox{\tiny $\FF$}}
\put(56,66){\mbox{\tiny $E$}}
\put(68,20){\line(0,1){20}}
\put(68,20){\line(-1,-2){10}}
\put(68,20){\line(1,-2){10}}
\put(40,16){\mbox{\tiny $\PP$}}
\put(57,23){\mbox{\tiny $F_1$}}
\put(70,23){\mbox{\tiny $F_2$}}
\put(48,3){\mbox{\tiny $G_1$}}
\put(78,3){\mbox{\tiny $G_2$}}
\put(66,3){\mbox{\tiny $\TT$}}
\put(56,32){\mbox{\tiny $-1$}}
\put(70,32){\mbox{\tiny $-1$}}
\put(52,10){\mbox{\tiny $-2$}}
\put(76,10){\mbox{\tiny $-2$}}

\end{picture}
\end{center}
\caption{The $2$-throw operation of the double curve $C$.}
\end{figure}

\subsubsection{Degenerating the line bundles.} 

We will now describe limits of line bundles on $\PP^2$ via the double degeneration.
The limit bundles are bundles in the central fibre, $X_{0}''$, of the family $\mathcal{X}''\to \Delta$ that agree on 
the intersection of the double curves.

\begin{remark}\label{infinitely near}
{The double blow-up of $C$ in $\mathcal{X}$ affected the surface $\PP$ by creating two pairs of points that are infinitely near. We will use the notation of \cite{CM3}
$[m_1,m_2]$, to indicate a fat point with multiplicity $m_1$ and an infinitely near fat point with multiplicity $m_2$.
More precisely, we adopt the notation $[m_1,m_2]$ to denote $m_2F_1+m_1(F_1+G_1).$}
\end{remark}

\begin{remark} 
In this section, as in Section \ref{degeneration of linear system}, 
we will describe limits of divisors, and not limits of linear systems.
This difference is emphasized in Remark \ref{infinitely near}. In particular, Proposition \ref{limit} 
should be understood as describing all possible limit divisors $D$ in the linear system 
${\mathcal{L}_{2,d}}((m+\alpha)^2, m^{8})$ on the central fibre $X_0''$. 
However, in order to simplify the language and also to be consistent with notation previously used 
in \cite{CDMR, CM3}, in this section we abuse notations and we use the linear system terminology. 
We must also emphasize now that this degeneration is different than the one we exploited in 
Section \ref{degeneration of linear system}.
More precisely, only the first degeneration of the blown-up projective space $\PP^3$, 
described in Sections \ref{degeneration of linear system} and \ref{sec:firstdeg} coincide. In Section
 \ref{degeneration of linear system} this degeneration was denoted by $\widetilde{\mathcal{X}}'$ while
 in Section \ref{sec:firstdeg} it was denoted by $\mathcal{X}'$. 
However, in  Section 
\ref{degeneration of linear system} the second degeneration was obtained by specializing points on the
 intersection of the two components while in Section \ref{sec:secdeg} the second degeneration is obtained from flopping 
a negative curve. 
In order to highlight this major difference we choose different notations. More precisely even if both 
 represent degenerations of  blown-up projective projective spaces ($\PP^3$ in Section \ref{degeneration of linear system} and $\PP^2$ here)
we will denote them by $\widetilde{\mathcal{X}}''$ and $\mathcal{X}''$ in Sections \ref{degeneration of linear system} and Sections \ref{sec:secdeg} respectively.
\end{remark}

We will now determine all possible limit bundles of the linear system $${\mathcal{L}}_{2,d}( (m+\alpha)^2, m^{8})$$
on the general fibre. 

\begin{itemize}
\item The line bundle on $\PP$ must be of the form ${\mathcal{L}}_{\delta}( m^{4}, [a,b],[a,b])$,
where $\delta, a, b$ represent the twisting parameters.
This line bundle  meets the $4$ times blown-up line $\delta-2a-2b$ times, see Remark 
\ref{geom description} below.
The system on $\FF$ is of the form $\ls_{\FF}=\ls_{2,t}(\delta-2a-2b,y^4,y'^2)$, for some $t,y,y'$. 
The assumption that ${\mathcal{L_{\FF}}}$ doesn't meet the cubic $C$ implies that the degree of ${\mathcal{L_{\FF}}}$ has to be even, write $t=2e$. 
Indeed $0=\ls_{\FF} \cdot C= 3t-2\delta+4a+4b-4y-2y'$.

Moreover, because $y=m-a-b$ and  $y'=m-a-b+\alpha$, then one can check that
$$\delta=3e-3m+5a+5b-\alpha.$$
\item Consider the intersection of $\mathbb{S}$ and $\FF$ that is a fibre on $\mathbb{S}$ and the cubic on $\FF.$
Note that $\mathcal{L_{\mathbb{S}}}$ is a horizontal bundle so it must have bidegree $(b,0)$. Moreover,
${\mathcal{L_{\TT}}}$  meets a fibre $a-b$ times and does not meet the negative section $B$.
Hence $${\mathcal{L_{\TT}}}={\mathcal{L}}_{2,a-b}.$$
\item The last parameter to be determined is $e$. We compute it by observing that the limit bundle should have degree $d$, that is the degree on the bundle on the general fibre. By pulling-back a line in the plane we get a line on $\PP$, a fibre on $\FF$, a fibre on $\TT$ and a fibre on $\mathbb{S}$. Therefore, the intersection number with all the bundles from above will have to add up to $d$. We obtain
$$e=\frac{d-3(a+b)}{2}.$$ 
\end{itemize}
Solving this system of linear equations we obtain

$$
{\mathcal{L_{\FF}}}={\mathcal{L}}_{2,d-3a-3b}\left(\frac{3d}{2}-3m-\frac{3(a+b)}{2}-\alpha, (m-a-b)^{4}, (m-a-b+\alpha)^{2}\right).
$$

The surface $\mathbb{S}$ will be contracted in the ruling direction. This last blow-down will affect the surface $\FF$ by contracting the cubic $C$ to a point by performing a series of Cremona transformations to ${\mathcal{L_{\FF}}}$, see Section \ref{cremona}. 
After contracting the surface $\mathbb{S}$ the bundle on $\FF$ becomes
$${\mathcal{L_{\FF}}}={\mathcal{L}}_{2,3m-\frac{d}{2}-\frac{3(a+b)}{2}+\alpha}\left(0, \left(2m-\frac{d}{2}-\frac{a+b}{2}+\alpha\right)^4, \left(2m-\frac{d}{2}-\frac{a+b}{2}\right)^2\right).$$
The zero multiplicity of ${\mathcal{L_{\FF}}}$ represents the image of the cubic after the Cremona transformations. Since we are contracting the surface $\mathbb{S}$ we will simply ignore this multiplicity.
We recall that contracting the cubic on $\FF$ will also affect the surface on $\PP$ by identifying the two last  $(-1)$-curves created on $\PP$, namely $F_{1}$ and $F_{2}.$

For the future analysis we will work with the normalization of $\PP,$ so we will consider $F_{1}$ and $F_{2}$ disjoint as before.
We obtain the following result.

\begin{proposition}\label{limit}
All limits of the bundle ${\mathcal{L}_{d}}((m+\alpha)^2, m^{8})$ are of the following form, for some choice of the parameters $a$ and $b$:
\begin{itemize}
\item ${\mathcal{L_{\PP}}}={\mathcal{L}}_{2,\frac{3d}{2}-3m+\frac{a+b}{2}-\alpha}\left(m^{4}, [a,b],[a,b]\right)$,
\item ${\mathcal{L_{\FF}}}={\mathcal{L}}_{2,3m-\frac{d}{2}-\frac{3(a+b)}{2}+\alpha}\left((2m-\frac{d}{2}-\frac{a+b}{2}+\alpha)^4, (2m-\frac{d}{2}-\frac{a+b}{2})^2\right)$,
\item ${\mathcal{L_{\TT}}}={\mathcal{L}}_{2,a-b}$.
\end{itemize}
\end{proposition}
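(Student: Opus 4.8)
The plan is to determine the limit bundle componentwise, using that a limit line bundle on the reducible central fibre $X_0''=\FF\cup\PP\cup\mathbb{S}\cup\TT$ is exactly the datum of a line bundle on each component together with matching data along the double curves, as in the fibre-product description of Section \ref{degeneration technicalities}. Accordingly, I would first write down a candidate on each of the four surfaces carrying undetermined parameters. On $\PP$ the double blow-up of $C$ creates two pairs of infinitely near points, so the natural ansatz is $\ls_\PP=\ls_\delta(m^4,[a,b],[a,b])$ in the notation of Remark \ref{infinitely near}, with $\delta$ a twisting parameter and $a,b$ the infinitely near multiplicities; on $\FF$ the ansatz is $\ls_\FF=\ls_{2,t}(\delta-2a-2b,y^4,y'^2)$, where the first multiplicity is forced to equal the intersection of $\ls_\PP$ with the double curve $E$; on $\mathbb{S}$ and $\TT$ one records only the bidegree, respectively the degree.

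Next I would impose the three kinds of constraints that pin down the parameters. First, the matching conditions: the restrictions of the bundles on adjacent components to each double curve must agree, which is what links the multiplicity $\delta-2a-2b$ on $\FF$ to $\ls_\PP$, and which, on the components created by the throw, forces $\ls_\mathbb{S}$ to be horizontal of bidegree $(b,0)$ and $\ls_\TT$ to meet a fibre $a-b$ times while avoiding the negative section, giving $\ls_\TT=\ls_{2,a-b}$. Second, the geometric constraint characteristic of the $2$-throw: the limit must not acquire the flopped curve as a component, i.e.\ $\ls_\FF\cdot C=0$; expanding this intersection and substituting $y=m-a-b$ and $y'=m-a-b+\alpha$ yields a single linear relation among $t,\delta,a,b,m,\alpha$. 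Third, the degree condition: pulling back a general line of $\PP^2$ gives a line on $\PP$ and a fibre on each of $\FF,\TT,\mathbb{S}$, and requiring the total intersection to equal $d$ fixes $t$, equivalently $e$ with $t=2e$, as $e=\tfrac{d-3(a+b)}{2}$.

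Solving this linear system then expresses $\delta$ and $t$ in terms of $a,b$ (and $d,m,\alpha$): the relation $\ls_\FF\cdot C=0$ gives $\delta=3e-3m+5a+5b-\alpha$, and the evenness of $t$ needed for $\delta\in\ZZ$ is precisely what forces $t=2e$. The final step is to perform the blow-down of $\mathbb{S}$ in the ruling direction. On $\FF$ this contraction is realised as the sequence of standard Cremona transformations of Section \ref{cremona} that contracts the cubic $C$, after which I would drop the resulting zero multiplicity (the image of $C$) and read off $\ls_\FF$ in reduced form; on $\PP$ the same operation identifies the two $(-1)$-curves $F_1,F_2$, and passing to the normalization of $\PP$ keeps them disjoint, which is exactly what produces the two infinitely near points $[a,b],[a,b]$. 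Substituting $e=\tfrac{d-3(a+b)}{2}$ into the expressions for $\delta$ and $t$ then gives the three displayed formulas.

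I expect the main obstacle to be the correct bookkeeping of the $\mathbb{S}$-contraction as a Cremona action on $\ls_\FF$: one must track how the cubic $C$ is contracted, verify that the parameters stay integral (the evenness of $t$), and keep the infinitely near structure on $\PP$ straight throughout. The matching and degree conditions are routine once the intersection behaviour along the double curves is fixed, but translating the birational modification of the total space into the precise transformation of the bundle on $\FF$, and checking it is simultaneously consistent with the matching imposed along all the double curves, is the delicate point.
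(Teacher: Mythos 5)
Your proposal is correct and follows essentially the same route as the paper: the same componentwise ansatz with parameters $\delta,a,b,t,y,y'$, the same three constraints (matching along the double curves, $\ls_{\FF}\cdot C=0$ with $y=m-a-b$, $y'=m-a-b+\alpha$, and the degree condition via pull-back of a line giving $e=\tfrac{d-3(a+b)}{2}$), and the same final bookkeeping of the $\mathbb{S}$-contraction via Cremona transformations on $\FF$ and passage to the normalization of $\PP$. The only cosmetic difference is that you derive the evenness of $t$ from integrality of $\delta$, whereas the paper reads it off directly from the relation $0=3t-2\delta+4a+4b-4y-2y'$; both are equivalent.
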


\begin{remark}\label{geom description}
We describe here how our degeneration method works.
The choice of the number of points that one collides in the central fibre in the first degeneration (in this case is four, see Section \ref{sec:firstdeg}) determines at each step of the degeneration the 
$(-1)$-curves that one must flop in order to obtain better ratios for proving emptiness or non-speciality results. In the analysis of linear systems with ten points both approaches of proving emptiness or non-speciality lead to the same degeneration, see \cite{CDMR} and \cite{CM3}. 

Namely, in any degeneration one computes a fixed limit ratio $\frac{d}{m}$ such that 
for any choice of the twisting parameters, all line bundles on the central fibre are 
non-empty. But this ratio satisfies the Nagata bound, so one of the linear systems 
is special. According to Segre-Harbourne-Gimigliano-Hirschowitz Conjecture 
this effective linear system is special because of the existence of a negative curve. 
In the first degeneration this curve is precisely the special cubic passing through 
seven points, denoted by $C$, that distinguishes by splitting off the line bundle on 
$\PP$. To improve the limit ratio, $\frac{d}{m}$, one needs to flop the curve creating 
speciality.

In general, if the curve creating speciality of one of the linear systems of the central fibre intersects the double curve $E$ once, then one performs a one-throw as explained in \cite{CM3}. In other words, by blowing-up the curve creating speciality, the exceptional divisor introduced is a ruled surface isomorphic to $\PP^1\times\PP^1$, so one can contract the other ruling. 

However in our case, the special curve is the cubic $C$ intersecting the double curve $E$ twice. It follows by the general intersection theory that the exceptional divisor created after the first blow-up, denoted by $\TT$, is a Hirzebruch surface $\FF_1$ that can not be contracted. So a second blow-up is necessary creating two infinitely near points; 
the new exceptional divisor, denoted by $\mathbb{S}$, is the ruled surface $\PP^1\times\PP^1$. This can be seen by intersection theory of surfaces in $\PP^3$. This affects the double curve of intersection, transforming $E$ to the strict transform of a line blown-up four times.
Finally, the fibre direction of $\mathbb{S}$ can be blown-down. This blow-down simplifies the geometry of $\FF$ but it increases the difficulty of the study of the linear system on $\PP$.

We would like to point our that each degeneration is uniquely determined by the number of points we decide to collide in the first step, the degree and multiplicities of the linear system. In this case the same degeneration as in \cite{CDMR} and \cite{CM3} can be applied, but for the sake of simplicity the computations that lead to this degeneration were omitted.
\end{remark}

We study now the effectivity of ${\mathcal{L_{\PP}}}$ and ${\mathcal{L_{\FF}}}$ for $d=3m+2\alpha$ and $m\ge 8\alpha$. 
Notice that by substituting $d=3m+2\alpha$ we obtain the following bundles on $\PP$ and $\F$:
\begin{align*}
{\mathcal{L_{\PP}}}&={\mathcal{L}}_{2,\frac{3m}{2}+\frac{a+b}{2}+2\alpha}\left( m^{4}, [a,b],[a,b]\right),\\
{\mathcal{L_{\FF}}}&={\mathcal{L}}_{2,\frac{3m}{2}-\frac{3(a+b)}{2}}\left( \left(\frac{m}{2}-\frac{a+b}{2}\right)^4, \left(\frac{m}{2}-\frac{a+b}{2}-\alpha\right)^2\right).
\end{align*}

\begin{remark}\label{F and T} The following two statements are obvious.
\begin{itemize}
\item
The linear system on $\TT,$ ${\mathcal{L_{\TT}}}={\mathcal{L}}_{2,a-b},$ is nonempty if and only if $a\geq b.$
\item 
The linear system on $\FF,$ ${\mathcal{L_{\FF}}}$ is non-empty if any only if $a+b\leq m.$
\end{itemize}
\end{remark}

We will now analyse the linear system on $\PP$.
We denote by $Q_{i}$ the four quartics ${\mathcal{L}}_{4}(2^3,1,[1,1]^{2})$ on $\PP$ and we see that these $(-1)$-curves split off the system if $m\geq 8\alpha$.
Indeed,
$${\mathcal{L}_{\PP}} Q_{i}={\mathcal{L}}_{2,\frac{3m}{2}+\frac{a+b}{2}+2\alpha}( m^{4}, [a,b],[a,b])\ {\mathcal{L}}_{2,4}(2^3,1,[1,1]^{2})=$$
$$4\left(\frac{3m}{2}+\frac{a+b}{2}+2\alpha\right)-3\cdot 2\cdot m-m-2a-2b=
8\alpha-m.$$ 
We further apply a series of four Cremona transformations to the linear system $\LL_{\PP}$ (which contains $8$ base points $p_1,\ldots,p_8$)
based respectively at the points 
$\{p_1,p_2,p_3\}$, $\{p_4,p_5,p_8\}$, $\{p_4,p_6,p_7\}$ and $\{p_1,p_2,p_3\}$.
Note that this series of Cremona 
transformations contracts the four quartics to a point at the same time. 
$$\textrm{Cr}({\mathcal{L}}_{\PP})=\mathcal{L}_{2,\frac{a+b}{2}-\frac{5m}{2}+18\alpha}((8\alpha-m)^4,[a-m+4\alpha,b-m+4\alpha], [a-m+4\alpha, b-m+4\alpha]).$$

For $m\geq 8\alpha$ 
the exceptional divisors corresponding to the first four points are $(-1)$-curves that split off the system. 
These exceptional divisors represent the four quadrics; we will remove them and forget the zero multiplicities created. The
residual system is
$${\mathcal{L'}}_{\PP}=\mathcal{L}_{2,\frac{a+b}{2}-\frac{5m}{2}+18\alpha}([a-m+4\alpha,b-m+4\alpha], [a-m+4\alpha, b-m+4\alpha])$$
It is obvious that ${\mathcal{L}}_{\PP}$ is empty if and only if ${\mathcal{L'}}_{\PP}$ is empty.

We are now ready to prove the main result of this section.

\begin{proof}[Proof of Theorem \ref{theorem empty}]
We want to prove that $\LL=\LL_{2,3m+2\alpha}((m+\alpha)^2, m^{8})$ is empty for $m>9\alpha$.

We assume by contradiction that there are some values of the 
parameters $a$ and $b$ for which both linear systems ${\mathcal{L}}_{\PP}$ and ${\mathcal{L}}_{\FF}$ 
are non-empty in the central fibre of the degeneration. 
If ${\mathcal{L}}_{\PP}$ is non-empty then the degree of ${\mathcal{L'}}_{\PP}$ is positive. 
In particular
$$a+b\geq 5m-36\alpha.$$
On the other hand, since ${\mathcal{L}}_{\FF}$ is non-empty, 
by Remark \ref{F and T}, we must have
$$a+b\leq m.$$

These two inequalities lead to a contradiction, hence 
the linear system $\LL_{2,3m+2\alpha}((m+\alpha)^2, m^{8})$ is empty. 
\end{proof}

In particular, Theorem \ref{theorem empty} gives the following consequence.

\begin{proposition}\label{theorem empty 1}
If $m\geq 8$, then the linear system $\LL_{2,3m+2}((m+1)^2, m^{8})$ is empty.
\end{proposition}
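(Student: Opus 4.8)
The plan is to split the range of $m$ into the cases already covered by Theorem \ref{theorem empty} and a short finite list of boundary cases treated by direct computation. Specializing Theorem \ref{theorem empty} to $\alpha=1$ shows that $\LL_{2,3m+2}((m+1)^2,m^8)$ is empty whenever $m>9$, i.e.\ for every $m\ge 10$. Hence the only values of $m$ not yet settled are $m=8$ and $m=9$, and it remains to prove emptiness of the two explicit plane systems $\LL_{2,26}(9^2,8^8)$ and $\LL_{2,29}(10^2,9^8)$.

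First I would observe that both of these systems have nonpositive virtual dimension. Indeed $\vdim(\LL_{2,26}(9^2,8^8))=\binom{28}{2}-2\binom{10}{2}-8\binom{9}{2}=0$ and $\vdim(\LL_{2,29}(10^2,9^8))=\binom{31}{2}-2\binom{11}{2}-8\binom{10}{2}=-5$, so in each case emptiness is exactly the expected behaviour and is equivalent to the statement that the linear conditions imposed by the ten fat points are independent. I would also note that both systems are already Cremona reduced: the three largest multiplicities sum precisely to the degree, so the Cremona coefficient $c$ of Section \ref{cremona} vanishes and the degree cannot be lowered by the standard transformation. Consequently no inductive reduction to a smaller system is available for these two cases.

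I would therefore establish emptiness by the computer-aided procedure described in Section \ref{Mac}. Choosing the ten points at random over the field of characteristic $31991$, one forms the matrix of the linear conditions imposed by the prescribed multiplicities on the space of degree-$d$ forms and checks that it has maximal rank, namely rank equal to the number $\binom{d+2}{2}$ of monomials of degree $d$. Since $\vdim\le 0$ in both cases, maximal rank forces $h^0=0$; and because the maximal-rank locus is open in the configuration of points, its nonemptiness (witnessed by the random choice) implies that the generic configuration also has maximal rank, so the systems are empty for general points. Together with the case $m\ge 10$ this proves the proposition.

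The crux is that the two missing cases lie exactly on the boundary of the numerical inequality in Theorem \ref{theorem empty}. With $\alpha=1$ the degeneration argument yields the constraints $5m-36\le a+b\le m$, which are contradictory only when $5m-36>m$, i.e.\ $m>9$; at $m=9$ one has $5m-36=9=m$, so the two bounds on $a+b$ become compatible and the degeneration no longer forces emptiness, and similarly for $m=8$. This is precisely why these cases fall outside the scope of the theorem, and since they are Cremona reduced with no available induction, the explicit rank computation appears to be the most direct way to close the gap.
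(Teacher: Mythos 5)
Your proposal is correct and follows essentially the same route as the paper: the paper's proof likewise invokes Theorem \ref{theorem empty} with $\alpha=1$ for $m\geq 10$ and disposes of the boundary cases $m=8,9$ by the \Mac\ computation of Section \ref{Mac}. Your added observations (the virtual dimensions $0$ and $-5$, the Cremona-reducedness of the two boundary systems, and the sharpness analysis of the inequality $5m-36\le a+b\le m$) are accurate but not needed beyond what the paper records.
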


\begin{proof}
First, we check cases $m=8,9$ by computer. 
For $m\geq 10$ we apply Theorem \ref{theorem empty} with $\alpha=1$.
\end{proof}

\subsection{Classification of homogeneous linear systems  $\LL_{3,2m+1}(m^9)$}
We are now in position to prove the complete classification of homogeneous 
linear systems in $\PP^3$ of degree $2m+1$ and with nine points.

\begin{theorem}
\label{degree 2m+1}
A linear system $\LL=\LL_{3,2m+1}(m^9)$ is special if and only if $m\ge9$. In particular we have:
\begin{itemize}
\item $\dim(\ls_{3,2m+1}(m^9))=\vdim(\ls_{3,2m+1}(m^9))$ for $m\le 8$;
\item $\dim(\ls_{3,2m+1}(m^9))=60$ for $m\ge 7$;
\item the quadric $Q$ through the nine base points is in the base locus of $\LL$ with multiplicity $m-7$, for any $m\ge 8$.
\end{itemize}
\end{theorem}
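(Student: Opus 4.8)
The plan is to combine the explicit virtual dimension count with the emptiness results of Section~\ref{section emptiness}, organising the infinite family $m\ge 7$ by a descending induction that strips off copies of the quadric.

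First I would record the virtual dimension. A direct computation gives
\[
\vdim(\ls_{3,2m+1}(m^9))=\binom{2m+4}{3}-9\binom{m+2}{3}=\frac{(m+1)(m+2)(12-m)}{6},
\]
and factoring shows this equals $60$ exactly for $m\in\{7,8\}$ and is strictly smaller than $60$ for every $m\ge 9$ (the equation $\vdim=60$ has roots $m=7,8,-6$). For the finitely many cases $1\le m\le 8$ I would verify non-speciality, i.e.\ $\dim=\vdim$, by the computer-aided procedure of Section~\ref{Mac} using \Mac. In particular this records the two anchoring values $\dim\ls_{3,13}(6^9)=56$ and $\dim\ls_{3,15}(7^9)=60$, the latter being the $m=7$ base case of the induction.

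The engine for $m\ge 8$ is the restriction sequence \eqref{restriction-to-quadric}. Writing $Q$ for the quadric through the nine points, the restricted system $\ls_{|Q}$ sits inside $\ls_{\PP^1\times\PP^1,(2m+1,2m+1)}(m^9)$, whose dimension equals $\dim\ls_{2,3m+2}((m+1)^2,m^8)$ by Lemma~\ref{equivalence-quadric}. By Proposition~\ref{theorem empty 1} this plane system is empty for $m\ge 8$, so $\ls_{|Q}=0$ and $Q$ is a fixed component of $\ls$. Removing it identifies $\ls_{3,2m+1}(m^9)$ with $Q+\ls_{3,2m-1}((m-1)^9)$, whence $\dim\ls_{3,2m+1}(m^9)=\dim\ls_{3,2m-1}((m-1)^9)$, the $(m-1)$-case. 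Iterating this reduction from $m$ down to $7$ — legitimate at each step since the emptiness of Proposition~\ref{theorem empty 1} holds for all multiplicities $\ge 8$, and the process stops at multiplicity $7$ — shows $\dim\ls_{3,2m+1}(m^9)=\dim\ls_{3,15}(7^9)=60$ for all $m\ge 7$, and exhibits $Q$ in the base locus with multiplicity at least $m-7$.

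To upgrade this to multiplicity exactly $m-7$, I would argue by dimension: if $Q$ occurred with multiplicity $\ge m-6$, then $\dim\ls=\dim\bigl(\ls-(m-6)Q\bigr)=\dim\ls_{3,13}(6^9)=56$, contradicting $\dim\ls=60$. Finally, the speciality dichotomy follows by comparison: for $m\le 8$ the system is non-special by the base-case computation, whereas for $m\ge 9$ we have $\dim=60>\vdim$, so the system is special, with the quadric as the sole obstruction. The genuinely hard input here is the plane emptiness statement Proposition~\ref{theorem empty 1} (equivalently Theorem~\ref{theorem empty}), which is already established by the degeneration of Section~\ref{section emptiness}; within the present argument the only delicate points are the correct bookkeeping of the quadric multiplicity across the induction and the finite list of \Mac-verified base cases, so I expect no further obstacle.
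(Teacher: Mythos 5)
Your proposal is correct and follows essentially the same route as the paper: the restriction sequence \eqref{restriction-to-quadric} to the quadric, emptiness of the restricted system for $m\ge 8$ (Proposition \ref{theorem empty 1} translated by Lemma \ref{equivalence-quadric}), iteration stripping off $Q$ down to $m=7$, and the comparison $\dim\ls_{3,15}(7^9)=60\neq 56=\dim\ls_{3,13}(6^9)$ to pin the multiplicity at exactly $m-7$. The only (immaterial) difference is bookkeeping of the base cases: you check the $\PP^3$ systems for $m\le 8$ directly with \Mac, whereas the paper deduces non-speciality for $m\le 8$ by induction on $m$ from the non-speciality of the restricted systems on the quadric (Corollary \ref{syst-quadric}), which are themselves computer-checked for $m\le 7$ via Lemma \ref{ten-points}.
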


Before proceeding with the proof of this classification result, we state the following lemma, 
that is an easy consequence of Proposition \ref{theorem empty 1}.
\begin{lemma}\label{ten-points}
The linear system $\LL_{2,3m+2}((m+1)^2,m^8)$ satisfies:
$$\dim(\LL_{2,3m+2}((m+1)^2,m^8))=\chi(\LL_{\PP^1\times\PP^1,(2m+1,2m+1)}(m^9))$$
for $m\le 8$ and it is empty for $m\ge8$.
\end{lemma}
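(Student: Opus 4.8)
The plan is to separate the statement into its two assertions and reduce both to facts about the associated planar system, using the $\PP^2\leftrightarrow\PP^1\times\PP^1$ dictionary of Lemma \ref{equivalence-quadric} as the single bridge between the two systems appearing in the statement.

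First I would invoke Lemma \ref{equivalence-quadric} with $d_1=d_2=2m+1$ and base multiplicity $m$; this is admissible since $m\le 2m+1$ for all $m\ge 0$, and it yields
$$\dim(\LL_{\PP^1\times\PP^1,(2m+1,2m+1)}(m^9))=\dim(\LL_{2,3m+2}((m+1)^2,m^8)).$$
Thus the dimension of the planar system in the statement coincides with that of the quadric system, and the lemma reduces to two claims: that the planar system is empty for $m\ge 8$, and that its dimension equals $\chi(\LL_{\PP^1\times\PP^1,(2m+1,2m+1)}(m^9))$ for $m\le 8$. The first of these is immediate, being exactly the content of Proposition \ref{theorem empty 1}.

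For the equality $\dim=\chi$ when $m\le 8$ I would first record that the quadric system has vanishing $h^2$: by Serre duality on the blow-up of $\PP^1\times\PP^1$ at the nine points, $h^2$ is computed by a residual bundle of base bidegree $(-2-(2m+1),-2-(2m+1))$ with nonnegative coefficients along the exceptional divisors, and since the base bidegree is negative this bundle has no sections. Consequently $\chi=h^0-h^1$, and the desired equality is equivalent to non-speciality, i.e.\ $h^1=0$. As this concerns only the finitely many values $m=0,1,\dots,8$, I would verify it case by case: for each such $m$, a direct computation with \Mac\ (as described in Section \ref{Mac}) checks that $\LL_{2,3m+2}((m+1)^2,m^8)$ has dimension equal to $\chi=(2m+2)^2-9\binom{m+1}{2}$. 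A consistency check at the overlap $m=8$ is reassuring, since there $\chi=18^2-9\binom{9}{2}=0$, in agreement with emptiness.

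The main obstacle is precisely this non-speciality for $m\le 8$: these are borderline interpolation problems for which no single uniform geometric argument is available, so the only route is to reduce them to a finite computer verification. All of the conceptual content lies in the translation furnished by Lemma \ref{equivalence-quadric} together with the vanishing of $h^2$; once these are in place, the two halves of the statement follow from Proposition \ref{theorem empty 1} and the finite check, respectively.
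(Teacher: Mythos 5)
Your proposal is correct and follows essentially the same route as the paper: the paper's proof is exactly a finite \Mac\ verification for small $m$ together with Proposition \ref{theorem empty 1} for $m\ge 8$ (with the $m=8$ dimension claim absorbed by emptiness since $\chi=0$ there). Your additional scaffolding --- the explicit bridge via Lemma \ref{equivalence-quadric} and the Serre-duality argument that $h^2=0$, reducing $\dim=\chi$ to non-speciality --- is sound but only makes precise what the paper leaves implicit in the phrase ``check by computer.''
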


\begin{proof}
We check by computer the statement for $m\le 7$. 
For $m\ge 8$ we use Proposition \ref{theorem empty 1}.
\end{proof}

By Lemma \ref{equivalence-quadric}, the previous lemma has the following straightforward consequence.

\begin{corollary}\label{syst-quadric}
The linear system $\LL_{\PP^1\times\PP^1,(2m+1,2m+1)}(m^9)$ is non-special for every $m\ge1$ and it is empty for $m\ge8$.
\end{corollary}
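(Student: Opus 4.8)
The plan is to deduce Corollary \ref{syst-quadric} directly from Lemma \ref{ten-points} by invoking the dictionary of Lemma \ref{equivalence-quadric}, which identifies dimensions of bidegree $(d,d)$ systems on $\PP^1\times\PP^1$ with multiple points to dimensions of plane systems. First I would set $d=2m+1$ and $m_1=m$ in the statement of Lemma \ref{equivalence-quadric}; the hypothesis $m\le d_1=d_2=2m+1$ is clearly satisfied for every $m\ge1$. This gives the equality
$$
\dim(\LL_{\PP^1\times\PP^1,(2m+1,2m+1)}(m^9))=\dim(\LL_{2,3m+2}((m+1)^2,m^8)),
$$
since $d_1+d_2-m_1=2(2m+1)-m=3m+2$ and $d_i-m_1=2m+1-m=m+1$ for $i=1,2$, while the remaining eight multiplicities $m$ are carried along unchanged. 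Thus the system on the quadric and the ten-point plane system of Lemma \ref{ten-points} have the same dimension.

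Next I would read off the two asserted conclusions. For emptiness, Lemma \ref{ten-points} states that $\LL_{2,3m+2}((m+1)^2,m^8)$ is empty for $m\ge8$, so by the displayed equality the corresponding system $\LL_{\PP^1\times\PP^1,(2m+1,2m+1)}(m^9)$ is empty for $m\ge8$ as well. For non-speciality, Lemma \ref{ten-points} gives, for $m\le8$,
$$
\dim(\LL_{2,3m+2}((m+1)^2,m^8))=\chi(\LL_{\PP^1\times\PP^1,(2m+1,2m+1)}(m^9)).
$$
Combining this with the equality above yields $\dim(\LL_{\PP^1\times\PP^1,(2m+1,2m+1)}(m^9))=\chi(\LL_{\PP^1\times\PP^1,(2m+1,2m+1)}(m^9))$ for $m\le8$, which is exactly the statement that this system is non-special in the range $1\le m\le 8$.

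It remains to cover non-speciality for $m\ge9$ (equivalently $m\ge8$), which is immediate once emptiness is known: an empty linear system is automatically non-special, since its actual dimension $0$ cannot exceed its expected dimension, and the Euler characteristic in the empty range is non-positive so that $h^0=\max(\chi,0)=0$ holds. (Here one uses that on $\PP^1\times\PP^1$ the higher cohomology controls speciality and, in the empty regime, $\chi\le0$ forces the expected dimension to be $0$.) Assembling the two ranges, the system is non-special for all $m\ge1$ and empty for $m\ge8$, which is the full content of the corollary.

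The main obstacle, if any, is purely bookkeeping: one must check that the substitution $d=2m+1$, $m_1=m$ into Lemma \ref{equivalence-quadric} produces precisely the degree $3m+2$ and the two multiplicities $m+1$ appearing in Lemma \ref{ten-points}, and that the non-speciality claim for the large-$m$ range is correctly phrased as a consequence of emptiness rather than of the dimension--$\chi$ equality (the latter being asserted only for $m\le8$). Since both lemmas are already established, no genuine geometric difficulty arises; the corollary is a formal transcription of Lemma \ref{ten-points} through the birational equivalence of Lemma \ref{equivalence-quadric}.
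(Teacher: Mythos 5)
Your proposal is correct and follows exactly the paper's route: the corollary is obtained by transporting Lemma \ref{ten-points} through the dictionary of Lemma \ref{equivalence-quadric}, with your substitution $d_1=d_2=2m+1$, distinguished multiplicity $m$, matching the paper's intended ``straightforward consequence.'' Your extra care in handling non-speciality for $m\ge 8$ via emptiness together with $\chi\le 0$ (indeed $\chi=\tfrac{1}{2}(m+1)(8-m)$) is just an explicit spelling-out of what the paper leaves implicit.
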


\begin{proof}[Proof of Theorem \ref{degree 2m+1}]
The restriction exact sequence
\eqref{restriction-to-quadric}
gives in this case:
$$
0\to\LL_{3,2(m-1)+1}((m-1)^9)\to\LL_{3,2m+1}(m^9)\to\LL_{\PP^1\times\PP^1,(2m+1,2m+1)}(m^9)\to0.
$$
By using induction on $m\ge1$ and Corollary \ref{syst-quadric}, we deduce that 
the linear system is non-special if and only if $m\le 8$ 
(notice that if $m=8$ we have $\chi(\LL_{\PP^1\times\PP^1,(17,17)}(8^9))=0$). 
In order to prove that the quadric is contained in the base locus of $\LL$ with multiplicity $m-7$, 
it is enough to use Corollary \ref{syst-quadric} and to notice that 
$\dim(\LL_{3,15}(7^9))\neq\dim(\LL_{3,13}(6^9))$.
\end{proof}

A straightforward consequence of Theorem \ref{degree 2m+1} is the following:

\begin{corollary}
Conjecture \ref{LUconj} holds  for any 
homogeneous linear system with nine points of multiplicity $m$ and degree $d\le 2m+1$.
\end{corollary}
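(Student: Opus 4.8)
The plan is to verify the Laface--Ugaglia prediction for $\LL=\LL_{3,d}(m^9)$ by splitting into the three ranges $d=2m+1$, $d=2m$ and $d\le 2m-1$, and in each range comparing the actual dimension with the one dictated by lines and the quadric $Q$. Recall that $\LL$ is Cremona reduced exactly when $c=4m-2d\le0$, i.e.\ when $d\ge 2m$; so only in the range $d\le 2m-1$ must one first pass to a Cremona reduced representative. Since Conjecture \ref{LUconj} is vacuous for non-special systems, the task in each range is either to exhibit the system as non-special or to show that its whole speciality is carried by $Q$.

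For $d=2m+1$ the answer is Theorem \ref{degree 2m+1}: the system is non-special for $m\le 8$, so the conjecture holds trivially, while for $m\ge 9$ it is special and $Q$ lies in the base locus with multiplicity $m-7$. As $\LL$ is Cremona reduced here and Theorem \ref{degree 2m+1} singles out $Q$ as the only positive-dimensional special effect variety, the speciality is entirely explained by $Q$, which is the content of the conjecture; concretely one checks that the quadric contribution $q(\LL)$ of Remark \ref{quasi-homo-q} accounts for $h^1(\LL)$. For $d=2m$ one has $\LL_{3,2m}(m^9)=\LL_{3,2m}(m^8,m)$, the case $a=m$ of Theorem \ref{theorem quasi-homogeneous}: the system is the $m$-fold quadric, of dimension $1$, with $h^1=\binom{m+1}{3}+\binom{m}{2}$ caused solely by $Q$ of multiplicity $m$, again matching the conjecture.

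For $d\le 2m-1$ I would first observe that $\vdim(\LL)<0$: the inequality $\binom{d+3}{3}<9\binom{m+2}{3}$ in the extremal case $d=2m-1$ reduces to $8m+4<9m+18$, valid for all $m$, and it is strictly stronger for smaller $d$. Hence $\edim(\LL)=0$ and the conjecture reduces to proving that $\LL$ is empty. I would deduce emptiness from the restriction sequence \eqref{restriction-to-quadric}, namely $0\to\LL_{3,d-2}((m-1)^9)\to\LL\to\LL_{|Q}$, by induction on $m$: the residual $\LL_{3,d-2}((m-1)^9)$ satisfies $d-2<2(m-1)$ and is empty by the inductive hypothesis (the base cases $m\le1$ being immediate), while $\LL_{|Q}\subseteq\LL_{\PP^1\times\PP^1,(d,d)}(m^9)$ is empty because, by Lemma \ref{equivalence-quadric}, it has the dimension of the plane system $\LL_{2,2d-m}(d-m,d-m,m^8)$, whose emptiness for $d<2m$ should follow from the degeneration results of Section \ref{section emptiness}. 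Alternatively, since $\vdim$ is invariant under the Cremona action and stays negative along the orbit, one may iterate Cremona reductions until a negative degree or a negative multiplicity appears, forcing $\dim(\LL)=0$.

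The main obstacle is precisely this emptiness in the range $d\le 2m-1$. The statements of Section \ref{section emptiness}, namely Theorem \ref{theorem empty} and Corollary \ref{syst-quadric}, are tailored to the boundary degrees $d=2m+\alpha$ with $m>9\alpha$, so covering every $d<2m$ uniformly requires either adapting those degeneration arguments to the restricted quadric systems $\LL_{2,2d-m}(d-m,d-m,m^8)$ for all $d$ in this range, or checking that the Cremona orbit of $\LL_{3,d}(m^9)$ always terminates at a manifestly empty system rather than at a Cremona reduced \emph{special} one. Once emptiness is secured the conclusion is immediate, since an empty system of negative virtual dimension is non-special and Conjecture \ref{LUconj} holds vacuously for it.
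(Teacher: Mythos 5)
Your treatment of the two substantive cases agrees with the paper and is essentially correct. For $d=2m+1$ the system is Cremona reduced, and Theorem \ref{degree 2m+1} gives non-speciality for $m\le 8$ (so Conjecture \ref{LUconj} holds automatically there), while for $m\ge 9$ one has $q(\LL)=-(m-8)(m+1)/2<0$ and the theorem gives $\dim(\LL_{3,2m+1}(m^9))=\dim(\LL_{3,2m-1}((m-1)^9))=60$, which is literally item (1) of the conjecture. (Two small inaccuracies here: the relevant quantity is $q(\LL)$ as defined in \eqref{qu}, not Remark \ref{quasi-homo-q}, which concerns the quasi-homogeneous family; and the thing to verify is $\dim(\LL)=\dim(\LL-Q)$, which follows directly from both dimensions being $60$, rather than an accounting of $h^1$.) For $d=2m$ the system is $\ls(m,m)$ and Theorem \ref{theorem quasi-homogeneous}, or the corollary following it, applies.

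The genuine gap is your handling of $d\le 2m-1$, where in fact no argument is needed. As you yourself computed in your first paragraph, $\LL_{3,d}(m^9)$ is Cremona reduced exactly when $d\ge 2m$, and Conjecture \ref{LUconj} is a statement \emph{only} about Cremona reduced systems; hence for $d\le 2m-1$ it holds vacuously. (The paper's own proof of Lemma \ref{low-degrees} adopts exactly this convention: after discarding trivial cases it begins ``Assume that $\LL$ is Cremona reduced''.) Instead, you set out to prove that $\LL_{3,d}(m^9)$ is empty for every $d\le 2m-1$, and you concede that this cannot be extracted from Theorem \ref{theorem empty} (which requires $d=2m+\alpha$ with $\alpha\ge1$ and $m>9\alpha$) and that your Cremona-orbit alternative is unverified. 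So, as written, your proof is incomplete precisely in the range where nothing needed to be proved. The repair is purely logical, not mathematical: drop the emptiness argument and invoke the Cremona reducedness hypothesis in the statement of the conjecture.
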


\section{Proof of Laface-Ugaglia Conjecture for linear systems with 9 points and multiplicities bounded by $8$}\label{proof LU}
\label{section Laf-Ug}

Let $\LL = \LL_{3,d} (m_1 ,\ldots , m_9)$ be the linear system of degree
 $d$ hypersurfaces
of $\PP^3$ with $9$ general multiple points of multiplicities $m_1,\dots,m_9$.
In this section we will assume that  $d\ge m_1\ge m_2\ge\ldots\ge m_9$.
Let $Q=\LL_{3,2}(1^9)$ be the 
unique quadric surface through the nine base points.  
We adopt the following notation
\begin{equation}
\label{qu}
q(\LL)=\chi(\LL_{|Q})=(d+1)^2-\sum_{i=1}^9\binom{m_i+1}{2}.
\end{equation}

Laface and Ugaglia formulated their conjecture in \cite[Conjecture 4.1]{laface-ugaglia-TAMS} and \cite[Conjecture 6.3]{laface-ugaglia-standard}.
Following the definition of linear speciality introduced in \cite{BDP}, we can reformulate 
this conjecture in the following way.

\begin{conjecture}[Laface-Ugaglia Conjecture]\label{LUconj}
Given a Cremona reduced linear system $\LL$ in $\PP^3$, we have 
\begin{enumerate}
\item
if $q(\LL)\le 0$, then $\dim(\LL)=\dim(\LL-Q)$;
\item
if $q(\LL)> 0$, then $\LL$ is linearly non-special.
\end{enumerate}
\end{conjecture}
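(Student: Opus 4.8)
The plan is to prove Conjecture \ref{LUconj} in the range treated in this section, i.e.\ for Cremona reduced systems $\LL=\LL_{3,d}(m_1,\dots,m_9)$ with $m_1\le 8$. The whole argument is organised around the restriction sequence \eqref{restriction-to-quadric}, which expresses $\dim(\LL)$ and $\hh^1(\LL)$ in terms of the residual system $\LL-Q=\LL_{3,d-2}((m_1-1),\dots,(m_9-1))$ and the restricted system $\LL_{|Q}$ on $Q\cong\PP^1\times\PP^1$, with $q(\LL)=\chi(\LL_{|Q})$ measuring the Euler characteristic of the latter.

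The first and most important step is to prove that $\LL_{\PP^1\times\PP^1,(d,d)}(m_1,\dots,m_9)$ is non-special for every $d$ and all $m_i\le 8$; this is the inhomogeneous analogue of Corollary \ref{syst-quadric}. By Lemma \ref{equivalence-quadric} it is equivalent to the non-speciality of the planar ten-point system $\LL_{2,2d-m_1}(d-m_1,d-m_1,m_2,\dots,m_9)$. Since the multiplicities are bounded by $8$, a planar Cremona reduction confines these systems to a bounded family, so I would combine the degeneration method of Section \ref{section emptiness} (in the spirit of Theorem \ref{theorem empty}) with the known low-multiplicity cases of the Segre--Harbourne--Gimigliano--Hirschowitz Conjecture and a finite check in \Mac{} for the remaining boundary configurations. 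This yields $\hh^0(Q,\LL_{|Q})=\max(0,q(\LL))$ and $\hh^1(Q,\LL_{|Q})=\max(0,-q(\LL))$.

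Granting this, part (1) is immediate: if $q(\LL)\le 0$ then $\hh^0(Q,\LL_{|Q})=0$, so the restriction map in \eqref{restriction-to-quadric} vanishes, $Q$ is a fixed component, and $\dim(\LL)=\dim(\LL-Q)$. For part (2), when $q(\LL)>0$ the vanishing $\hh^1(Q,\LL_{|Q})=0$ and the long exact sequence give $\hh^1(\LL)\le \hh^1(\LL-Q)$, so the speciality of $\LL$ is controlled by the residual system. I would then argue that no quadric obstruction survives once $q>0$: iterating part (1) strips off $Q$ while $q\le 0$ (re-reducing by a Cremona transformation if necessary) and strictly lowers the degree, so after finitely many steps one reaches either an empty system or one with $q>0$ whose only special effect varieties are linear cycles, governed by the linear speciality theory of \cite{BDP,DP}. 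The genuinely special borderline families are precisely the quasi-homogeneous systems $\LL_{3,2m}(m^8,a)$ and the homogeneous systems $\LL_{3,2m+1}(m^9)$ settled in Theorems \ref{theorem quasi-homogeneous} and \ref{degree 2m+1}, while the finitely many remaining Cremona reduced configurations with $m_i\le 8$ are verified with \Mac{}.

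The main obstacle is part (2): establishing linear non-speciality uniformly in $d$ requires ruling out every obstruction beyond lines and planes, and the reduction to a finite list of cases is delicate. The hypothesis $m_1\le 8$ is exactly what forces the Cremona reduction to terminate on a bounded set of configurations that can be checked directly. The second demanding ingredient is the full non-speciality of the ten-point planar restriction in the first step, since ten general points in $\PP^2$ lie at the edge of what is unconditionally known, and only the bounded multiplicities make it tractable.
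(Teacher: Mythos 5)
Your strategy---run everything through the restriction sequence \eqref{restriction-to-quadric} after first establishing non-speciality of \emph{every} restricted system $\LL_{\PP^1\times\PP^1,(d,d)}(m_1,\dots,m_9)$ with $m_i\le 8$---is genuinely different from the paper's proof of Theorem \ref{LUthm}, which splits by degree ($d\le 2m-1$, $d=2m$, $d\ge 2m+1$), applies \cite[Theorem 5.3]{BDP} whenever $\sum m_i\le 3d+3$, invokes Theorems \ref{theorem quasi-homogeneous} and \ref{degree 2m+1} at the two critical degrees, and propagates to all larger degrees by the monotonicity of Remark \ref{subsystems}. Unfortunately your version has a real gap at its foundation. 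The finiteness claim underlying your first step is false: by Lemma \ref{equivalence-quadric} the restricted systems correspond to the planar systems $\LL_{2,2d-m_1}\bigl((d-m_1)^2,m_2,\dots,m_9\bigr)$, and these are \emph{already} Cremona reduced as soon as $d-m_1\ge m_2$ (the transformation based at the two points of multiplicity $d-m_1$ and a third point has $c=m_2-m_1\le 0$), so planar Cremona reduction does not confine them to a bounded family: the two multiplicities $d-m_1$ grow without bound with $d$. "Non-speciality of all restrictions" is therefore an infinite family of SHGH-type statements about ten general points, precisely the kind of statement the paper avoids: it proves only the one emptiness result it needs (Theorem \ref{theorem empty}, for $\LL_{2,3m+2\alpha}((m+\alpha)^2,m^8)$ with $m>9\alpha$) and handles all large degrees by monotonicity rather than by restriction. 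For part (1) your input is at least a finite problem, since $q(\LL)\le 0$ and $m_i\le 8$ force $(d+1)^2\le 9\binom{9}{2}=324$, i.e.\ $d\le 17$; but you never make this observation, and your finiteness rests on the incorrect Cremona claim.

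Second, your argument for part (2) does not close. The exact sequence only gives the upper bound $\hh^1(\LL)\le\hh^1(\LL-Q)$, whereas linear non-speciality is an exact statement: you need $\hh^1(\LL)$ to equal the line contribution $\sum_{i,j}\binom{m_i+m_j-d+1}{3}$ of Remark \ref{easy}. To extract equality from the bound you would need the exact value of $\hh^1(\LL-Q)$, i.e.\ the very statement you are proving, for the residual system---which moreover need not be Cremona reduced and need not satisfy $q(\LL-Q)>0$; when $q(\LL-Q)\le 0$ the residual is genuinely \emph{not} linearly non-special (the quadric obstructs it, cf.\ Remark \ref{q}), and your chain of inequalities then bounds $\hh^1(\LL)$ by a quantity strictly larger than the linear contribution. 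The sentence "one reaches \dots\ one with $q>0$ whose only special effect varieties are linear cycles" assumes the conclusion of part (2) at that stage. The paper fills this differently: for $d\le 2m$ it reduces, via \eqref{crem}, \eqref{allmult} and \cite[Theorem 5.3]{BDP}, to the explicit finite lists of Tables 1--4 checked with \Mac, and for $d\ge 2m+1$ it deduces everything from the single non-special, non-empty system $\LL_{3,2m+1}(m^9)$ of Theorem \ref{degree 2m+1} via Remark \ref{subsystems}, never needing control of restrictions in unbounded degree.
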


\begin{remark}\label{easy}
Since $\LL$ is Cremona reduced, i.e. $m_1+m_2+m_3+m_4\le 2d$, it does not contain
any plane in the base locus. Hence Conjecture \ref{LUconj} says that if $q(\LL)>0$, 
then $\LL$ is special if and only if $m_1+m_2-d\ge 2$ and in this case:
$$\dim(\LL)=\ldim(\LL)=\chi(\LL)+\sum_{i,j}\binom{m_i+m_j-d+1}3,$$
where $\ldim$ denotes the affine linear dimension, see \cite[Definition 1.2]{DP}.
\end{remark}

\begin{remark}\label{q}
If $q(\LL)\le 0$ and $\dim(\LL)=\dim(\LL-Q)$, 
from the exact sequence \eqref{restriction-to-quadric}
we obtain that
$$
\hh^1(\LL)=\hh^1(\LL-Q)-q(\LL).
$$
This means that the quadric $Q$ is a special effect surface for the linear 
system $\ls$.
\end{remark}

\begin{remark}
We point out that a quadric surface in the base locus can give speciality even if it is 
contained with multiplicity one.
Consider for instance the linear system $\LL=\LL_{3,8}(4^7,3^2)$ for
which $\dim(\LL)=6$, $\hh^1=-q(\LL)=1$. 
This system contains in its base locus the quadric $Q$ through the nine points, but does not contain $2Q$.

This behaviour is different from the case of linear special effect varieties, for which any linear cycle of dimension $l$ contributes to the speciality only if its multiplicity in the base locus is at least $l+1$.
\end{remark}

\begin{remark}\label{q-in-general}
Notice that when a linear system $\LL$ has a quadric 
surface as special effect variety, computing $\hh^1(\LL)$ is quite difficult in general.
In fact the quasi-homogeneous systems classified in Section \ref{section-quasi-homogeneous}
form a very special family for which we understand completely the situation, 
but this is not the case in general. 

Let $\LL$ be a linear system with $q(\LL)\le0$.
Assume that
\begin{itemize}
\item
 $q(\LL-k Q)\le0$ for any $0\le k\le \overline{k}$
and $q(\LL-(\overline{k}+1)Q)>0$,
\item $\LL-k Q$ restricts to
non-special linear systems on the quadric $Q$, for any $0\le k\le \overline{k}$.
\end{itemize}

Then, by using Remark \ref{q}, we get the following formula:
\begin{equation}\label{formula h1}
\hh^1(\LL)=-\sum_{k=0}^{\overline{k}}q(\LL-k Q).
\end{equation}

By using \eqref{formula h1} in  the case of quasi-homogeneous systems $\LL_{3,2m}(m^8,a)$ 
and using Remark \ref{quasi-homo-q}, we recover exactly the formula
$\hh^1(\LL) = \binom{a+1}{3}+\binom{a}2$ 
of Theorem \ref{theorem quasi-homogeneous}. In this case $\overline{k}= a$.

The problem in general is to determine the value of $\overline{k}$.
Let us see an example:
if $\LL=\LL_{3,13}(8,6^8)$, then $q(\LL)=-8$, $q(\LL-q)=-4$,  $q(\LL-2Q)=-1$, while $q(\LL-3Q)=1>0$, 
hence we have $\hh^1(\LL)=8+4+1=13$ and in this case $\overline{k}=2$.
\end{remark}

\begin{remark}\label{subsystems}
Given two vectors $v=(m_1,\ldots,m_s)$ and $v'=(m'_1,\ldots,m'_s)$ in $\NN^s$, we write $v'\le v$ if
and only if $m'_i\le m_i$ for any $1\le i\le s$.

It is easy to see  that if a linear system $\LL_{n,d}(v)$ is non-special and non-empty,
then also any linear system $\LL_{n,d}(v')$ is non-special and non-empty for any vector $v'\le v$.
\end{remark}

Now we establish Laface-Ugaglia Conjecture for any linear system with nine points of multiplicities bounded by $8$.
We start with a lemma whose proof is essentially computational.

\begin{lemma}\label{low-degrees}
If a linear system $\LL=\LL_{3,d}(m_1,\ldots,m_9)$ is such that
$m=\max(m_i)\le 8$ and $d< 2m$, then it satisfies Conjecture \ref{LUconj}.
\end{lemma}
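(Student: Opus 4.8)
The plan is to reduce the assertion to a finite verification and then dispatch it with the \Mac\ scripts described in Section \ref{Mac}. Since we work under the standing assumption $d\ge m_1\ge\cdots\ge m_9\ge 0$ together with $m=m_1\le 8$ and $m\le d\le 2m-1$, there are only finitely many numerical types $(d;m_1,\ldots,m_9)$ to examine. Because Conjecture \ref{LUconj} only makes an assertion about Cremona reduced systems, I would first discard every type with $m_1+m_2+m_3+m_4>2d$ (see Section \ref{cremona} and Remark \ref{easy}), for which there is nothing to prove, and keep only the Cremona reduced ones. The list can be trimmed further by the monotonicity principle of Remark \ref{subsystems}: once a system is shown to be non-special and non-empty, the same follows for every system with a smaller multiplicity vector, so it suffices to verify the maximal configurations.

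Next I would split the remaining cases according to the sign of $q(\LL)=(d+1)^2-\sum_{i=1}^9\binom{m_i+1}{2}$. When $q(\LL)>0$, the content of the conjecture is linear non-speciality, and by Remark \ref{easy} this amounts to checking the single numerical equality $\dim(\LL)=\ldim(\LL)=\chi(\LL)+\sum_{i,j}\binom{m_i+m_j-d+1}{3}$. I would compute $\dim(\LL)$ with the $\PP^3$ script and compare it with the right-hand side, which is an explicit binomial expression.

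When $q(\LL)\le 0$, the goal is the base-locus equality $\dim(\LL)=\dim(\LL-Q)$, where $\LL-Q=\LL_{3,d-2}(m_1-1,\ldots,m_9-1)$. The quickest route is to compute both dimensions directly by computer and check their equality; a lighter alternative, which I would prefer in order to reduce the computational load, is to argue by induction on $m$. Indeed $\LL-Q$ again has degree $d-2<2(m-1)$ and maximal multiplicity $m-1\le 7$, so it lies in the same regime with a strictly smaller $m$, and the restriction sequence \eqref{restriction-to-quadric} together with Lemma \ref{equivalence-quadric} reduces the missing input to the emptiness of the plane system $\ls_{2,2d-m_1}(d-m_1,d-m_1,m_2,\ldots,m_9)$; once this restricted system is empty, the map $H^0(\LL)\to H^0(\LL_{|Q})$ vanishes and the desired equality follows. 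The induction bottoms out at small values of $m$, which are checked directly.

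The main obstacle is the $q(\LL)\le 0$ case: the equality $\dim(\LL)=\dim(\LL-Q)$ is precisely the statement that $Q$ enters the base locus with full multiplicity, the very phenomenon for which no closed formula is available in general (see Remark \ref{q-in-general}). Consequently the difficulty is not conceptual but organizational, namely arranging the finitely many Cremona reduced types so that the random-rank computations over $\FF_{31991}$ remain feasible and so that every emptiness input used in the induction has itself been certified. Once this bookkeeping is in place, each individual case reduces to a maximal-rank check of a randomly chosen square matrix, and the lemma follows.
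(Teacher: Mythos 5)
Your overall strategy---reduce to a finite list of Cremona reduced numerical types and certify them with the \Mac\ scripts---is the same as the paper's, but the handling of individual cases differs in both directions. Your treatment of the $q(\LL)\le 0$ cases via emptiness of the plane system $\ls_{2,2d-m_1}(d-m_1,d-m_1,m_2,\ldots,m_9)$ is a genuinely different and logically clean route: emptiness at a random configuration does certify emptiness in general position (semicontinuity goes the right way for emptiness), and then \eqref{restriction-to-quadric} gives $\dim(\LL)=\dim(\LL-Q)$ outright, with no induction on $m$ actually needed; this parallels the philosophy of Section \ref{section degree 2m+1}. Note, though, that emptiness of the complete restricted system is sufficient but not necessary for the conjecture, so if the computer finds some restricted system non-empty you must fall back on a direct argument for that case. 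What you lose by not following the paper is the drastic shortening of the list: the paper first disposes of $m\le 5$ by \cite{ballico-brambilla-caruso-sala}, then invokes \cite[Theorem 5.3]{BDP} to show that every Cremona reduced system with $\sum_i m_i\le 3d+3$ (resp.\ $3d+2$ when $d=m$) is automatically linearly non-special, so that only the handful of types violating this inequality survive to Tables 1--3. Without these reductions your enumeration is still finite, but vastly larger.

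Two soft spots in your write-up need repair. First, ``it suffices to verify the maximal configurations'' overstates what Remark \ref{subsystems} gives: that remark propagates only ``non-special and non-empty'' downward. For maximal vectors that turn out special---and these are precisely the interesting entries---nothing propagates from it; the paper needs \cite[Lemma 5.5 and Remark 5.6]{BDP} and \cite[Theorem 1.2]{Chandler} to push linear non-speciality down to smaller vectors, and the quadric-obstructed cases do not propagate at all, so each must be checked individually. Second, your ``quickest route'' for $q(\LL)\le 0$---compute $\dim(\LL)$ and $\dim(\LL-Q)$ at random points and compare---is not by itself a proof: random configurations only give upper bounds for the general-position dimensions, and equality of two upper bounds certifies nothing. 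The check must be anchored: split off quadrics until you reach a system with $q>0$ whose non-speciality is certified (computed upper bound equal to the theoretical lower bound $\ldim$), then propagate equalities back up through $\dim(\LL)\ge\dim(\LL-Q)$. Your preferred emptiness route avoids this pitfall entirely, which is a further reason to adopt it.
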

\begin{proof}
First of all it is clear that if $d<m$ the system is empty, so we assume $d\ge m$.
Assume that $\LL$ is Cremona reduced, that is 
\begin{equation}\label{crem}
m_1+m_2+m_3+m_4\le 2d.
\end{equation}

Now if $d=m$, then by \eqref{crem} we have that $m_1=m$ and $m_2<m$. 
Therefore by applying \cite[Theorem 5.3]{BDP}, we have that if $\sum_{i=1}^9 m_i\le 3d +2$ then $\LL$ is linearly non-special.
Hence we can also assume
\begin{equation}\label{allmult-degreem}
\sum_{i=1}^9 m_i> 3d +2.
\end{equation}

For any $m\le8$, only the following systems satisfy
conditions \eqref{crem} and \eqref{allmult-degreem}: $\LL_{3,6}(6,2^8)$ and $\LL_{3,7}(7,3,2^7)$.
It is easy to check that these two systems are linearly non-special.

Assume now that $d\ge m+1$.
We know, by \cite{ballico-brambilla-caruso-sala}, that Laface-Ugaglia Conjecture 
is true for any linear system with multiplicities 
bounded by $5$. So we can assume $6\le m\le 8$.

Moreover, by applying again \cite[Theorem 5.3]{BDP}, 
we have that if $\sum_{i=1}^9 m_i\le 3d +3$ then $\LL$ is linearly non-special.
Hence we can also assume
\begin{equation}\label{allmult}
\sum_{i=1}^9 m_i> 3d +3.
\end{equation}

Now we list all the possible linear systems which satisfy conditions \eqref{crem} and \eqref{allmult}, for any $5\le m\le 8$ and any $m+1\le d\le 2m-1$.
Then we prove that all the cases in the list satisfy the conjecture using the following procedure. For any degree we start to check the
 cases $\LL=\LL_{3,d}(m_1,\ldots,m_8)=\LL_{3,d}(v)$ for the largest vectors $v$.
We compute $\dim(\LL)$ by means of the computer 
system \Mac\ as explained in Section \ref{Mac}.

If $\LL$ is non-special and non-empty, then by Remark \ref{subsystems}, 
also the linear system $\LL_{3,d}(v')$ is non-special and non-empty,
 for any vector $v'\le v\in\ZZ^9$, hence we greatly reduce the number of cases to be checked.

If $\LL$ is linearly non-special, then we apply \cite[Lemma 5.5 and Remark 5.6]{BDP} and \cite[Theorem 1.2]{Chandler}
and we obtain again that any system
$\LL_{3,d}(v')$, for $v'\le v\in\ZZ^9$, is linearly non-special.
Hence we further reduce the number of cases to be checked and 
we obtain at the end the lists contained in Tables 1, 2, 3. 
Notice that in the tables the special and linearly non-special systems are marked with  $*$. 
By applying this procedure we complete the proof of the lemma.
\end{proof}

We give now the main result of this section: 
\begin{theorem}\label{LUthm}
Conjecture \ref{LUconj} is true for any linear system $\LL_{3,d}(m_1,\ldots,m_9)$ such that
$m=\max(m_i)\le 8$.
\end{theorem}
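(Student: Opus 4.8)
The plan is to use the bound $m=\max(m_i)\le 8$ to confine all the genuinely difficult systems to a finite list, and to clear the remaining infinite family with a single linear non-speciality statement. As in Remark~\ref{easy}, I would first assume $\LL$ is Cremona reduced—this is harmless, since Cremona transformations preserve both $\dim(\LL)$ and the truth of Conjecture~\ref{LUconj}—and order $d\ge m_1\ge\cdots\ge m_9$, discarding the empty case $d<m_1$.

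The decisive numerical remark is that, under $m_i\le 8$, each of the two \emph{a priori} hard situations has bounded degree. If $q(\LL)=(d+1)^2-\sum_{i=1}^9\binom{m_i+1}{2}\le 0$, then $(d+1)^2\le 9\binom{9}{2}=324$, forcing $d\le 17$; and if $\sum_{i=1}^9 m_i>3d+3$, then $3d+3<9\cdot 8=72$, forcing $d\le 22$. Conversely, on the complementary region where $q(\LL)>0$ and $\sum_{i=1}^9 m_i\le 3d+3$, assertion~(2) of Conjecture~\ref{LUconj} is exactly the conclusion of \cite[Theorem 5.3]{BDP}, which guarantees that $\LL$ is linearly non-special. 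This single citation disposes of the entire infinite family of large-degree systems at once.

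What remains is the finite list of systems with $d\le 22$ and $m_i\le 8$. I would organize it exactly as in the proof of Lemma~\ref{low-degrees}: compute $\dim(\LL)$ with \Mac{} for the maximal multiplicity vectors in each degree, then propagate non-speciality and linear non-speciality to all smaller vectors through Remark~\ref{subsystems} together with \cite[Lemma 5.5 and Remark 5.6]{BDP} and \cite[Theorem 1.2]{Chandler}. The sub-range $d<2m$ is already settled by Lemma~\ref{low-degrees}, and the homogeneous and quasi-homogeneous shapes $\LL_{3,2m+1}(m^9)$ and $\LL_{3,2m}(m^8,a)$ occurring in this list are covered exactly by Theorems~\ref{degree 2m+1} and~\ref{theorem quasi-homogeneous}, which cuts down the amount of direct computation needed.

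For the finite cases with $q(\LL)\le 0$ I must verify assertion~(1), namely $\dim(\LL)=\dim(\LL-Q)$, i.e. that $Q$ splits off as a fixed component. By the restriction sequence~\eqref{restriction-to-quadric} this is equivalent to the vanishing of the restricted system $\LL_{\PP^1\times\PP^1,(d,d)}(m_1,\dots,m_9)$, and by Lemma~\ref{equivalence-quadric} to the emptiness of the ten-point plane system $\LL_{2,2d-m_1}(d-m_1,d-m_1,m_2,\dots,m_9)$; this I would establish using Theorem~\ref{theorem empty} and Theorem~\ref{quadric base locus} where they apply, and by \Mac{} otherwise. I expect this to be the main obstacle: as Remark~\ref{q-in-general} illustrates, the quadric contributes to the speciality in a way that has no closed formula, and controlling the restriction to $Q$ hinges precisely on emptiness statements for systems on $\PP^1\times\PP^1$, which is the delicate input supplied by the degeneration arguments of Section~\ref{section emptiness}.
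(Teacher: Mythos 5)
Your proposal is correct, but it dispatches the unbounded-degree part of the theorem by a different mechanism than the paper. The paper's decomposition is relative to $m$: degrees $d\le 2m-1$ are Lemma \ref{low-degrees}, degree $d=2m$ is Theorem \ref{theorem quasi-homogeneous} together with the computer check of Table 4, and the entire infinite range $d\ge 2m+1$ is settled with no absolute degree cap at all: by Theorem \ref{degree 2m+1} the system $\LL_{3,2m+1}(m^9)$ is non-special and non-empty, hence so is $\LL_{3,d}(m^9)$ for every $d\ge 2m+1$, and Remark \ref{subsystems} then propagates non-speciality and non-emptiness to every system with $m_i\le m$ in those degrees; since there $m_i+m_j\le 2m<d$, no line contributes, so non-speciality is exactly linear non-speciality, and the single system with $q\le 0$ in this range, $\LL_{3,17}(8^9)$, satisfies assertion (1) of the conjecture because Theorem \ref{degree 2m+1} puts $Q$ in its base locus. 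Your route instead caps the degree absolutely — $q(\LL)\le 0$ forces $d\le 17$ and $\sum m_i>3d+3$ forces $d\le 22$ — and kills all degrees $\ge 23$ with \cite[Theorem 5.3]{BDP}. Both are valid: the paper's monotonicity argument buys a much smaller finite verification (nothing above degree $2m$ is ever computed), while yours buys a crisp ``finite list plus one citation'' structure, and, as you note, the portion $17\le d\le 22$ of your list collapses back onto Theorem \ref{degree 2m+1} and Remark \ref{subsystems} anyway.

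Two details need repair, both fixable inside your scheme. First, the paper's own use of \cite[Theorem 5.3]{BDP} shows that its hypothesis is sensitive to multiplicities equal to the degree (the bound drops to $\sum m_i\le 3d+2$ when $m_1=d$), so that theorem does not cover the ``exact'' complementary region $\{q>0,\ \sum m_i\le 3d+3\}$ in the case $m_1=d$; this costs nothing, since such systems have $d=m_1\le 8$ and lie inside your finite list, but the word ``exactly'' is wrong, and for $d\ge 23$ you should note $m_1\le 8<d$ so the clean bound applies. Second, emptiness of the complete restricted system $\LL_{\PP^1\times\PP^1,(d,d)}(m_1,\dots,m_9)$ implies, but is not equivalent to, $\dim(\LL)=\dim(\LL-Q)$: the restriction map can have zero image while the complete system on $Q$ is non-empty. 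Moreover, Theorems \ref{theorem empty} and \ref{quadric base locus} require $m>9\alpha$ and therefore never apply when $m\le 8$; the usable inputs for your $q\le 0$ cases are Proposition \ref{theorem empty 1}, Corollary \ref{syst-quadric}, and direct \Mac\ computation, which is in effect what the paper's tables record.
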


\begin{proof}
If the degree $d\le 2m-1$ the result follows from Lemma \ref{low-degrees}.

If $d=2m$, by Theorem \ref{theorem quasi-homogeneous}, 
we know that the quasi homogeneous linear systems $\LL_{3,2m}(m^8,a)$ are special 
if and only if $2\le a\le m$ and they satisfy Conjecture \ref{LUconj}.
Arguing as in Lemma \ref{low-degrees}, in order to complete the proof we need to check 
all linear systems satisfying \eqref{crem} and \eqref{allmult} for any $6\le m\le 8$.

The list of these cases (reduced by Remark \ref{subsystems}) is contained in Table 4 
and we checked all of them by computer.

Now if $d\ge 2m+1$, by Theorem \ref{degree 2m+1}  the linear system $\LL_{3,2m+1}(m^9)$ is non-special
and non-empty. Hence any homogeneous linear system $\LL_{3,d}(m^9)$ 
for $d\ge 2m+1$ is also non-special and non-empty.

Finally we deduce that any (non-homogeneous) linear system $\LL_{3,d}(m_1,\ldots,m_9)$ with $m_i\le m$ 
is non-special and non-empty, by Remark \ref{subsystems}.
This completes the proof.
\end{proof}

\subsection{Future directions} 
We conclude this paper by pointing out  possible future directions (both theoretical and computational)
 in  establishing Laface-Ugaglia Conjecture for nine points.
On the one hand, one can introduce further degenerations of $\PP^2$ in order to obtain a better bound in the base locus lemma,  
Theorem \ref{quadric base locus}.
On the other hand, the combination of the results of Section \ref{section degree 2m+1} and 
of similar computer-based computations as the one performed in this section could
 improve the bound on the multiplicities of Theorem \ref{LUthm}. 

\newpage
\subsection{Tables}
The linear systems marked with $*$ in Table \ref{table-m=6} , Table \ref{table-m=7} and Table \ref{table-m=8}
are linearly non-special, namely their dimension equals the linear expected dimension. 
All other linear systems have  the quadric surface through nine points as
special effect component, namely it splits off the system and gives speciality.
\ 
\begin{table}[!ht]
\centering

\caption{The case $m=6$}\label{table-m=6}
\begin{tabular}{|c|c|c|c|c|c|}
\hline
degree&$(m_1,m_2,m_3,m_4,m_5,m_6,m_7,m_8,m_9)$& $q$ & $\hh^0$ & $\hh^1$ \\
\hline
11 &(6, 5, 5, 5, 5, 5, 5, 5, 5)& 3& 28&0\\
11&(6, 6, 5, 5, 5, 5, 5, 5, 4)& 2& 22&0\\
11&(6, 6, 5, 5, 5, 5, 5, 5, 5)& -3& 10&3\\
11&(6, 6, 6, 4, 4, 4, 4, 4, 4)& 21& 76&0\\
\hline
10&(6, 5, 5, 4, 4, 4, 4, 4, 4)& 10& 40&0\\
10&*(6, 6, 4, 4, 4, 4, 4, 4, 4)& 9&35 &1\\
10&*(6, 6, 5, 3, 3, 3, 3, 3, 3)&28 &80 &1\\
\hline
9&(6, 4, 4, 4, 4, 4, 4, 4, 3)& 3& 14&0\\
9&(6, 4, 4, 4, 4, 4, 4, 4, 4)& -1&5&1\\
9&*(6, 5, 4, 3, 3, 3, 3, 3, 3)& 18&50&1\\
9&*(6, 6, 3, 3, 3, 3, 3, 3, 3)& 16&42&4\\
\hline

8&*(6, 4, 3, 3, 3, 3, 3, 3, 3)& 8& 20&1\\
\hline
\end{tabular}
\end{table}

\begin{table}[!ht]
\centering
\caption{The case $m=7$}\label{table-m=7}
\begin{tabular}{|c|c|c|c|c|c|}
\hline
degree&$(m_1,m_2,m_3,m_4,m_5,m_6,m_7,m_8,m_9)$& $q$ & $\hh^0$ & $\hh^1$ \\
\hline
13&(7, 6, 6, 6, 6, 6, 6, 6, 6)& 0& 28&0\\
13&(7, 7, 6, 6, 6, 6, 6, 5, 5)& 5& 42&0\\
13&(7, 7, 6, 6, 6, 6, 6, 6, 4)& 4& 36&0\\
13&(7, 7, 6, 6, 6, 6, 6, 6, 5)& -1&22&1\\
13&(7, 7, 6, 6, 6, 6, 6, 6, 6)& -7& 10&10\\
13&(7, 7, 7, 5, 5, 5, 5, 5, 5)& 22& 98&0\\
\hline
12&(7, 6, 6, 5, 5, 5, 5, 5, 5)& 9& 49&0\\
12&*(7, 7, 5, 5, 5, 5, 5, 5, 5)& 8& 43&1\\
12&*(7, 7, 6, 4, 4, 4, 4, 4, 4)& 32& 112&1\\
\hline
11&(7, 5, 5, 5, 5, 5, 5, 5, 4)& 1& 15&0\\
11&(7, 5, 5, 5, 5, 5, 5, 5, 5)& -4&5&5\\
11&*(7, 6, 5, 4, 4, 4, 4, 4, 4)& 20&70&1\\
11&*(7, 6, 6, 3, 3, 3, 3, 3, 3)& 38&110&2\\
11&*(7, 7, 4, 4, 4, 4, 4, 4, 4)& 18&60&4\\
11&*(7, 7, 5, 3, 3, 3, 3, 3, 3)& 37&105&4\\
\hline

10&*(7, 5, 4, 4, 4, 4, 4, 4, 4)& 8& 28& 1\\
10&*(7, 5, 5, 3, 3, 3, 3, 3, 3)& 27& 74& 2\\
10&* (7, 6, 4, 3, 3, 3, 3, 3, 3)& 26& 70& 4\\
10&*(7, 7, 3, 3, 3, 3, 3, 3, 3)& 23& 58& 10\\
\hline

9&*(7, 4, 4, 3, 3, 3, 3, 3, 3)& 16& 38& 2\\ 
9&*(7, 5, 3, 3, 3, 3, 3, 3, 3)& 15& 35& 4\\ 
\hline
8&*(7, 3, 3, 3, 3, 3, 3, 3, 3)& 5& 9& 8\\
\hline
\end{tabular}
\end{table}

\clearpage

\begin{table}[!ht]
\centering
\caption{The case $m=8$}\label{table-m=8}
\begin{tabular}{|c|c|c|c|c|c|}
\hline
degree&$(m_1,m_2,m_3,m_4,m_5,m_6,m_7,m_8,m_9)$& $q$ & $\hh^0$ & $\hh^1$ \\
\hline
15&(8, 7, 7, 7, 7, 7, 7, 7, 6)& 3& 52&0\\
15&(8, 7, 7, 7, 7, 7, 7, 7, 7)& -4& 28&4\\
15&(8, 8, 7, 7, 7, 7, 7, 6, 6)& 2& 44&0\\
15&(8, 8, 7, 7, 7, 7, 7, 7, 5)& 1& 37&0\\
15&(8, 8, 7, 7, 7, 7, 7, 7, 6)& -5& 22&6\\
15&(8, 8, 7, 7, 7, 7, 7, 7, 7)& -12&10&22\\
15&(8, 8, 8, 6, 6, 6, 6, 6, 6)& 22& 120&0\\
\hline
14&(8, 7, 7, 6, 6, 6, 6, 6, 6)& 7& 56&0\\
14&*(8, 8, 6, 6, 6, 6, 6, 6, 6)& 6& 49& 1\\
14&*(8, 8, 7, 5, 5, 5, 5, 5, 5)& 35& 147& 1\\
14&*(8, 8, 8, 4, 4, 4, 4, 4, 4)& 57& 203& 3\\
\hline
13&(8, 6, 6, 6, 6, 6, 6, 5, 5)& 4& 34&0\\
13&(8, 6, 6, 6, 6, 6, 6, 6, 4)& 3& 28&0\\
13&(8, 6, 6, 6, 6, 6, 6, 6, 5)& -2& 15&2\\
13&(8, 6, 6, 6, 6, 6, 6, 6, 6)& -8& 5&13\\
13&*(8, 7, 6, 5, 5, 5, 5, 5, 5)& 21& 91& 1\\
13&*(8, 7, 7, 4, 4, 4, 4, 4, 4)& 44& 154& 2\\
13&*(8, 8, 5, 5, 5, 5, 5, 5, 5)& 19& 79& 4 \\
13&*(8, 8, 6, 4, 4, 4, 4, 4, 4)& 43& 148& 4\\
\hline

12&*(8, 6, 5, 5, 5, 5, 5, 5, 5)& 7& 35&1 \\
12&*(8, 6, 6, 4, 4, 4, 4, 4, 4)& 31& 105& 2\\
12&*(8, 7, 5, 4, 4, 4, 4, 4, 4)& 30& 100& 4\\
12&*(8, 8, 4, 4, 4, 4, 4, 4, 4)& 27& 85&  10\\
\hline

11&*(8, 5, 5, 4, 4, 4, 4, 4, 4)& 18& 56& 2\\
11&*(8, 6, 4, 4, 4, 4, 4, 4, 4)& 17& 52& 4\\
11&*(8, 6, 5, 3, 3, 3, 3, 3, 3)& 36& 98& 5\\
11&*(8, 7, 4, 3, 3, 3, 3, 3, 3)& 34& 90& 10\\
11&*(8, 8, 3, 3, 3, 3, 3, 3, 3)& 30& 74& 20\\
\hline
10&*(8, 4, 4, 4, 4, 4, 4, 4, 4)& 5& 14&8\\
10&*(8, 5, 4, 3, 3, 3, 3, 3, 3)& 24& 56&5\\
10&*(8, 6, 3, 3, 3, 3, 3, 3, 3)& 22& 50&10\\
\hline
9&*(8, 4, 3, 3, 3, 3, 3, 3, 3)& 12& 21&11\\
\hline
\end{tabular}
\end{table}

\clearpage

\begin{table}[!ht]
\label{degree-2m}
\centering
\caption{The case  $d=2m$}
\begin{tabular}{|c|c|c|c|c|}
\hline
$(m_1,m_2,m_3,m_4,m_5,m_6,m_7,m_8,m_9)$& $q$ & $\hh^0$ & $\hh^1$ \\
\hline
(6, 6, 6, 6, 6, 5, 5, 5, 5)& 4& 35&0 \\
(6, 6, 6, 6, 6, 6, 5, 5, 4)& 3& 29&0\\
(6, 6, 6, 6, 6, 6, 5, 5, 5)& -2& 16&2\\
(6, 6, 6, 6, 6, 6, 6, 4, 4)& 2& 23&0\\
(6, 6, 6, 6, 6, 6, 6, 5, 3)& 1& 18&0\\
(6, 6, 6, 6, 6, 6, 6, 5, 4)& -3&11&3\\
(6, 6, 6, 6, 6, 6, 6, 5, 5)& -8&6&13\\
\hline
(7, 7, 7, 7, 7, 6, 6, 6, 6)& 1& 36&0\\
(7, 7, 7, 7, 7, 7, 6, 6, 5)& 0& 29&0\\
(7, 7, 7, 7, 7, 7, 6, 6, 6)& -6& 16&8\\
(7, 7, 7, 7, 7, 7, 7, 5, 4)& 4& 37&0\\
(7, 7, 7, 7, 7, 7, 7, 5, 5)& -1& 23&1\\
(7, 7, 7, 7, 7, 7, 7, 6, 3)& 2& 26&0\\
(7, 7, 7, 7, 7, 7, 7, 6, 4)& -2& 18&2\\
(7, 7, 7, 7, 7, 7, 7, 6, 5)& -7& 1&10\\
(7, 7, 7, 7, 7, 7, 7, 6, 6)& -13&6&26\\
\hline
(8, 8, 8, 8, 7, 7, 7, 7, 7)& 5& 69&0\\
(8, 8, 8, 8, 8, 7, 7, 7, 6)& 4& 61&0\\
(8, 8, 8, 8, 8, 7, 7, 7, 7)& -3& 36& 3\\
(8, 8, 8, 8, 8, 8, 7, 6, 6)& 3& 53& 0\\
(8, 8, 8, 8, 8, 8, 7, 7, 5)& 2& 46&0\\
(8, 8, 8, 8, 8, 8, 7, 7, 6)& -4& 29&4\\
(8, 8, 8, 8, 8, 8, 7, 7, 7)& -11& 16&19\\
(8, 8, 8, 8, 8, 8, 8, 6, 5)& 1& 38&0\\
(8, 8, 8, 8, 8, 8, 8, 6, 6)& -5& 23&6\\
(8, 8, 8, 8, 8, 8, 8, 7, 3)& 3& 35&0\\
(8, 8, 8, 8, 8, 8, 8, 7, 4)& -1& 26&1\\
(8, 8, 8, 8, 8, 8, 8, 7, 5)& -6& 18&8\\
(8, 8, 8, 8, 8, 8, 8, 7, 6)& -12& 11&22\\
(8, 8, 8, 8, 8, 8, 8, 7, 7)& -19& 6&45\\
\hline
\end{tabular}
\end{table}

\end{document}